\documentclass[11pt]{article}

\usepackage[T1]{fontenc}
\usepackage[english]{babel}
\usepackage[a4paper,margin=1in]{geometry}
\usepackage{amsmath,amssymb,amsthm}
\usepackage{booktabs}
\usepackage{algorithm}
\usepackage{algpseudocode}
\usepackage{tikz}
\usepackage{xcolor}
\usepackage[normalem]{ulem}
\usepackage[colorlinks=true,linkcolor=blue,citecolor=blue,urlcolor=blue]{hyperref}

\theoremstyle{plain}
\newtheorem{theorem}{Theorem}
\newtheorem{proposition}{Proposition}
\newtheorem{lemma}{Lemma}

\theoremstyle{definition}
\newtheorem{definition}{Definition}
\newtheorem{example}{Example}
\theoremstyle{remark}

\newcommand{\exP}{\operatorname{ex}_{\mathcal P}}

\newcommand{\Ball}{\mathcal B}
\newcommand{\Near}{\mathcal N}
\newcommand{\defterm}[1]{\textit{#1}}

\newif\ifshowchanges
\showchangesfalse

\ifshowchanges
  
  \newcommand{\delete}[1]{\textcolor{blue}{\sout{#1}}}
  
\else
  
  \newcommand{\delete}[1]{}
  
\fi

\newcommand{\CertOne}{\texorpdfstring{\hyperref[cert:C1]{\textup{(C1)}}}{(C1)}}
\newcommand{\CertTwo}{\texorpdfstring{\hyperref[cert:C2]{\textup{(C2)}}}{(C2)}}
\newcommand{\CertThree}{\texorpdfstring{\hyperref[cert:C3]{\textup{(C3)}}}{(C3)}}
\newcommand{\CertFour}{\texorpdfstring{\hyperref[cert:C4]{\textup{(C4)}}}{(C4)}}
\newcommand{\CertFive}{\texorpdfstring{\hyperref[cert:C5]{\textup{(C5)}}}{(C5)}}
\newcommand{\CertSix}{\texorpdfstring{\hyperref[cert:C6]{\textup{(C6)}}}{(C6)}}

\hypersetup{
  pdftitle={An improved upper bound for the planar Turan number of C8},
  pdfsubject={Planar Turan numbers; C8-free planar graphs},
  pdfauthor={Xuqing Bai, Weichan Liu, Xiangxiang Nie, Xin Zhang},
  pdfkeywords={planar Turan number, C8-free planar graph, discharging, computer-assisted proof},
  pdflang={en-US}
}

\title{An improved upper bound for the planar Tur\'an number of \texorpdfstring{$C_8$}{C8}}
\author{%
Xuqing Bai\textsuperscript{1}\thanks{Supported by the National Natural Science
Foundation of China (Grant No.\,12501495).}\quad
Weichan Liu\textsuperscript{2}\thanks{Supported by the Postdoctoral Fellowship
Program of CPSF (Grant No.\,GZC20252020) and the China Postdoctoral Science
Foundation (Grant No.\,2025M783118).}\quad
Xiangxiang Nie\textsuperscript{3}\quad
Xin Zhang\textsuperscript{1}\thanks{Corresponding author.}\\[0.5em]
\textsuperscript{1}School of Mathematics and Statistics, Xidian University, Xi'an, 710071, China\\
\textsuperscript{2}School of Mathematics, Shandong University, Jinan, 250100, China\\
\textsuperscript{3}Data Science Institute, Shandong University, Jinan, 250100, China\\[0.5em]
\texttt{baixuqing@xidian.edu.cn},
\texttt{wcliu@sdu.edu.cn}\\
\texttt{xiangxiangnie@sdu.edu.cn},
\texttt{xzhang@xidian.edu.cn}
}
\date{}

\begin{document}
\maketitle

\begin{abstract}
We prove that every $n$-vertex simple planar graph with no copy of $C_8$
has at most
\[
  \frac{69}{25}(n-2)
\]
edges, for every $n\ge 8$.  This improves the best known bound
\[
  \frac{323}{108}n-6
  \qquad \text{for every } n\ge 27.
\]
\end{abstract}

{\small
\noindent\textbf{Keywords.} planar Tur\'an number, planar graph, $C_8$-free graph,
discharging, computer-assisted proof.

\smallskip
\noindent\textbf{2020 Mathematics Subject Classification.} 05C35, 05C10, 05C38
\par}

\section{Introduction}

For a graph $H$, the \defterm{planar Tur\'an number} $\exP(n,H)$ is the largest
number of edges in an $n$-vertex simple planar graph containing no copy of
$H$ as a subgraph.  This is the planar analogue of the classical Tur\'an
problem~\cite{Turan1941}.  Since planar graphs have at most $3n-6$ edges,
the problem is to determine the correct linear coefficient and the extremal
constructions.  For background on planar Tur\'an problems we refer to the survey
of Lan, Shi and Song~\cite{LanShiSong2021}.

Planar Tur\'an numbers of cycles have been studied intensively since the work
of Dowden~\cite{Dowden2016}.  Euler's formula gives
\[
  \exP(n,C_3)=2n-4 \qquad(n\ge3).
\]
Dowden proved the following upper bounds for $C_4$ and $C_5$:
\[
  \exP(n,C_4)\le \frac{15}{7}(n-2)\qquad(n\ge4),
  \qquad
  \exP(n,C_5)\le \frac{12n-33}{5}\qquad(n\ge 11).
\]
For $C_6$, Lan, Shi and Song~\cite{LanShiSong2019} obtained
$\exP(n,C_6)\le 18(n-2)/7$ for $n\ge6$, and Ghosh, Gy\H{o}ri, Martin, Paulos and
Xiao~\cite{GhoshGyoriMartinPaulosXiao2022} proved the stronger bound
\[
  \exP(n,C_6)\le \frac52 n-7
  \qquad (n\ge 18).
\]
Results of Shi, Walsh and Yu~\cite{ShiWalshYuC7} and of Gy\H{o}ri, Li and
Zhou~\cite{GyoriLiZhou2023} address $C_7$; in particular, the former gives
\[
  \exP(n,C_7)\le \frac{18}{7}n-\frac{48}{7}
  \qquad (n\ge 39),
\]
with equality attained for infinitely many $n$.

For longer cycles the behaviour is more subtle.  Ghosh, Gy\H{o}ri, Martin,
Paulos and Xiao~\cite{GhoshGyoriMartinPaulosXiao2022} conjectured that, for
each $k\ge 7$ and all sufficiently large $n$,
\[
  \exP(n,C_k)\le
  \frac{3(k-1)}{k}n-\frac{6(k+1)}{k}.
\]
For $k=8$, the conjectured bound has leading coefficient $21/8$.
Cranston, Lidick\'y, Liu and Shantanam~\cite{CranstonLidickyLiuShantanam2022}
disproved the conjecture for every $k\ge 11$, but their counterexamples do
not settle the case $C_8$.  Their construction
also gives
\[
  \exP(n,C_8)\ge \frac{21}{8}n-\frac{27}{4}
\]
for infinitely many $n$.

For comparison, a general result of Shi, Walsh and
Yu~\cite{ShiWalshYuDense} gives the following bound for the \defterm{theta graph}
$\theta_k$, obtained from $C_k$ by adding a chord that forms a triangle
with two consecutive edges of the cycle, and gives
\[
  \exP(n,\theta_k)\le
  3n-6-\frac{n}{4k^{\log_2 3}}
  \qquad
  (k\ge 4,\ n\ge k^{\log_2 3}).
\]
Here we use the range in which that recursive estimate is applied.  Since a
copy of $\theta_k$ contains a copy of $C_k$, every $C_k$-free graph is
$\theta_k$-free, and the estimate also applies to $C_k$-free planar
graphs.  Taking $k=8$, and using
$8^{\log_2 3}=27$ and $4\cdot27=108$, gives
\[
  \exP(n,C_8)\le
  3n-6-\frac{n}{108}
  =
  \frac{323}{108}n-6
  \qquad (n\ge 27).
\]
Thus this general comparison bound has leading coefficient
$323/108\approx 2.991$, whereas our bound has coefficient
$69/25=2.76$.  The construction cited above has leading coefficient
$21/8=2.625$.
These two coefficients do not coincide.  Thus the theorem below is an improved
upper bound, not a determination of the exact planar Tur\'an number: we do not
claim that $69/25$ is best possible, that the displayed bound is
attained, or that equality cases are characterized.

We prove the following direct $C_8$-specific improvement.

\begin{theorem}\label{thm:turan}
For every $n\ge 8$,
\[
  \exP(n,C_8)\le \frac{69}{25}(n-2).
\]
\end{theorem}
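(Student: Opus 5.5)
We argue by a minimal counterexample combined with discharging on a plane embedding. Let $G$ be a $C_8$-free plane graph on $n\ge 8$ vertices with $e(G)>\frac{69}{25}(n-2)$, and choose it with $n$ minimal.

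\textbf{Reductions.} First we establish the basic structural facts.
\begin{itemize}
\item Deleting a vertex of degree at most $2$ loses at most $2<\frac{69}{25}$ edges, so minimality (together with a direct check of the small cases $n<8$ via $3n-6$, where needed) gives $\delta(G)\ge 3$.
\item If $G$ were disconnected or had a cut vertex, we would apply the bound to the blocks and use that $\frac{69}{25}(n-2)$ is superadditive in the appropriate sense. Hence $G$ is $2$-connected, so every face is bounded by a cycle.
\item Since $G$ has no $C_8$, no face has length exactly $8$.
\end{itemize}

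\textbf{Reformulation.} Euler's formula turns the target into the statement that the charge
\[
\sum_v \bigl(25\deg(v)-50\bigr)+\sum_f \bigl(25|f|-50\bigr)\cdot\tfrac{?}{}
\]
is appropriately balanced. Concretely, writing $e\le \frac{69}{25}(n-2)$ as $25e\le 69n-138$ and using $n-e+f=2$, we assign each vertex charge $\deg(v)-\tfrac{50}{19}$ and each face charge suitably weighted. More usefully, we put all charge on faces: we need
\[
\sum_f \Bigl(\tfrac{44}{25}|f|-\dots\Bigr)\ge 0.
\]
The cleanest choice is to show $e\le \frac{69}{25}(n-2)$ is equivalent to $\sum_f\bigl(|f|-\tfrac{50}{19}\bigr)\ge \dots$. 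We would fix the exact weights once, so that each triangle has negative charge and every face of length at least $4$ has positive charge.

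\textbf{Local structure near triangles.} The key configurations are clusters of triangular faces glued along edges, which we call triangle-blocks. The $C_8$-freeness heavily restricts them: a block of $k$ triangles sharing edges contains cycles of all lengths from $3$ to $k+2$. Hence a block spans at most $7$ vertices' worth of boundary, which is why its size is bounded. We would enumerate the admissible triangle-blocks, for example $K_4$-like pieces, fans, and wheels with fewer than $7$ spokes, together with their neighbouring faces.

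The discharging rule sends charge from each non-triangular face to the adjacent triangle-blocks across shared edges. We would then verify the following:
\begin{enumerate}
\item Each block ends nonnegative, using that its boundary edges border faces of length $4,5,6,7$ or at least $9$.
\item Long faces retain nonnegative charge.
\end{enumerate}
The constant $69/25$ presumably arises from the tightest case, a specific block surrounded by short faces.

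\textbf{Main obstacle.} The hard part is bounding the charge that a short face of length $4$ to $7$ can afford to give while its neighbouring blocks still avoid creating a $C_8$ through the face. A path around the face combined with a detour through a block often yields an $8$-cycle, and ruling these configurations in or out is a large finite case analysis. We would first try to prove structural lemmas of the form ``a $4$-face cannot be adjacent to two blocks of a given type.'' Failing that, we would enumerate the local configurations (block plus its incident faces, up to bounded radius) by computer, checking $C_8$-freeness and the final charges, consistent with the computer-assisted nature flagged in the keywords.
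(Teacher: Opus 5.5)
Your proposal is an outline rather than a proof, and the parts it leaves open carry the theorem. The weights are never fixed: your displayed reformulations still contain placeholders. The transfer rule is not quantified. Neither of the two facts a discharging argument here must establish is proved: that every triangular face can be paid for by some $4^+$-face, and that no $4^+$-face has to pay for too many triangles. The one structural statement you offer to make the case analysis finite is also false: a block of $k$ triangular faces need not contain cycles of all lengths from $3$ to $k+2$. Three triangular faces around a vertex of degree $3$ span only $4$ vertices. The six triangular faces around a vertex of degree $6$ form a wheel on $7$ vertices, which is $C_8$-free, although your claim would put an $8$-cycle in it. Pancyclicity up to $k+2$ holds only for strips in which each new triangle adds a new vertex. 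So you get no bound on block size, and no control over how many block edges border $4^+$-faces, which is exactly what a per-edge transfer rule would need.

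The paper supplies these missing inputs as follows.
\begin{itemize}
\item Each face starts with $2d(f)-4$, for a total of $4n-8$, and sends $\alpha=50/69$ to each incident vertex. Nonnegative final face charges then give $4n-8\ge 2\alpha e(G)$, that is, $e(G)\le\frac{69}{25}(n-2)$.
\item A triangle is left with $-4/23$. This is paid in equal shares by its nearest $4^+$-faces, which may be up to three dual steps away rather than adjacent.
\item This needs the covering fact that for $n\ge 8$ every triangle has a $4^+$-face within dual distance $3$ (certificate (C1)).
\item It also needs load bounds, all certified by exhaustive search: at most $7,7,5,4$ paid triangles for $d=4,5,6,7$; the refinement $\ell(f)\le 19/3$ for $4$-faces (at most six triangles have $f$ as unique nearest face, and any seventh has at least three nearest faces); and $\ell(f)\le\frac92 d(f)$ for $d\ge 9$.
\item The constant is forced by the $4$-face, since $\frac{88}{69}\cdot 4-4=\frac{4}{23}\cdot\frac{19}{3}$. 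Your guess about where $69/25$ comes from is therefore not connected to any computed quantity.
\end{itemize}

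Your reductions also miss the base case. The degree reduction needs $n-1\ge 8$. For $n=8$, a counterexample has at least $17>\frac{69}{25}\cdot 6$ edges. Deleting a vertex of degree $2$ leaves $7$ vertices and $15=3\cdot 7-6$ edges, which is no contradiction. So $n=8$ cannot be obtained from the $3n-6$ bound on smaller graphs. The paper handles it separately by an exhaustive check (certificate (C6)) that every planar graph on $8$ vertices with $17$ or $18$ edges is Hamiltonian, and hence contains $C_8$.
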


This paper is organized as follows.  Section~\ref{sec:local-inputs} states
and proves the local structural inputs, including the finite verification
claims used by the proof.  Section~\ref{sec:main-proof} derives
Theorem~\ref{thm:turan} from these inputs.  The
\hyperref[sec:local-proofs]{Appendix} records the encodings used for these
finite enumerations, together with the algorithms, certificates, and audit
details that make the finite verifications reproducible.

\section{Local certificates and their consequences}\label{sec:local-inputs}

The discharging proof uses only a small number of local facts.  This section
states those facts in graph-theoretic form and proves the consequences needed
later.  The finite assertions below are certified by exhaustive enumerations of
finite plane patches around a specified root face or root triangle; the
encodings, certificate files, and reproducibility details are recorded in the
\hyperref[sec:local-proofs]{Appendix}.

Throughout the paper all graphs are finite and simple.  A \defterm{plane graph}
is a planar graph with a fixed embedding.  A \defterm{$k$-face} is a face of
degree $k$, and a \defterm{$4^+$-face} is a face of degree at least $4$.
A graph is \defterm{$C_8$-free} if it has no subgraph isomorphic to an
8-cycle; the cycle need not be induced.  For a plane graph $G$, its
\defterm{geometric dual} is denoted by $G^*$, and the \defterm{dual distance}
between faces $f$ and $g$ is denoted by $d^*(f,g)$.  In the main proof we
reduce to the 2-connected case, where every face boundary is a cycle.

\begin{definition}[\textit{Nearest faces and equal-split load}]\label{def:nearest-load}
Let $G$ be a 2-connected plane graph and let $g$ be a triangular face.
If $G$ has at least one $4^+$-face, the \defterm{nearest-face set} of
$g$ is
\[
  \Near(g)=\{f:\ f\text{ is a }4^+\text{-face and }d^*(f,g)
       \text{ is minimum among }4^+\text{-faces}\}.
\]
If no $4^+$-face exists, set $\Near(g)=\varnothing$.  For a $4^+$-face
$f$, a triangle $g$ is \defterm{$f$-bad} when
\[
  f\in \Near(g)\quad\text{and}\quad d^*(f,g)\le 3.
\]
The \defterm{nearest-bad count} of $f$ is
\[
  b(f)=|\{g:\ g\text{ is }f\text{-bad}\}|.
\]
The \defterm{equal-split load} of $f$ is
\[
  \ell(f)=\sum_{\substack{g\text{ a 3-face}\\ f\in \Near(g)}}
  \frac{1}{|\Near(g)|}.
\]
\end{definition}

Thus each triangular face carries one unit of load and splits it equally among
its nearest $4^+$-faces.  A triangle whose summand appears in $\ell(f)$ is a
\defterm{contributor to $f$}.

For a $4^+$-face $f$ and a triangular face $g$, define the
\defterm{shortest-entry set}
\[
  S_f(g)=\{e\in E(\partial f):\text{ some shortest dual path from $f$ to
  $g$ first leaves $f$ across $e$}\}.
\]
For a boundary edge $e$ of $f$, define
\[
  c_{f,e}(g)=
  \frac{\mathbf 1_{\{f\in\Near(g)\}}\mathbf 1_{\{e\in S_f(g)\}}}
       {|\Near(g)|\,|S_f(g)|}
\]
and
\[
  L_f(e)=\sum_g c_{f,e}(g).
\]
Then
\[
  \ell(f)=\sum_{e\in E(\partial f)}L_f(e),
\]
because each contribution assigned to $f$ is divided among the shortest
boundary entries of $f$ and then recombined.

\begin{example}[Equal splitting]
If a triangle $g$ has $\Near(g)=\{f_1,f_2\}$, then it contributes
$1/2$ to each of $\ell(f_1)$ and $\ell(f_2)$.  If instead
$\Near(g)=\{f\}$, then it contributes $1$ to $\ell(f)$.  Thus six
unique-nearest contributors to a $4$-face contribute $6$, while a further
contributor with at least three nearest $4^+$-faces contributes at most
$1/3$.
\end{example}

\begin{example}[Shortest entries]
Suppose $f\in\Near(g)$, $\Near(g)=\{f,h\}$, and the shortest dual paths
from $f$ to $g$ leave $f$ precisely through two boundary edges
$e_1,e_2$.  Then $S_f(g)=\{e_1,e_2\}$, so
\[
  c_{f,e_1}(g)=c_{f,e_2}(g)=\frac{1}{2\cdot2}=\frac14,
\]
and $c_{f,e}(g)=0$ for every other boundary edge $e$ of $f$.  The total
contribution of $g$ to $f$ is still $1/2$, now split equally between the
two shortest entries.
\end{example}

In the finite local assertions below, a \defterm{finite plane patch} means a
finite plane subgraph together with the faces already exposed.  Its boundary
edges are the edges incident with exactly one exposed face.  A \defterm{root}
face or root triangle is the distinguished face from which dual distances are
measured.

\begin{theorem}[Finite local certificates]\label{thm:finite-certificates}
The following finite local assertions hold.
\begin{enumerate}
\item[(C1)]\label{cert:C1} Let $P$ be a simple plane patch with a distinguished
root triangle. Assume that every face of $P$ at dual distance at most $3$
from the root triangle is triangular. Then $P$ is not contained in any simple
plane $C_8$-free graph that has a face at dual distance $4$ from the root
triangle, and $P$ has at most seven vertices.
\item[(C2)]\label{cert:C2} If a root $4^+$-face has degree $d\in\{4,5,6,7\}$, then the
maximum possible number of bad contributing triangles is
\[
\begin{array}{c|cccc}
 d&4&5&6&7\\
\hline
 \max b&7&7&5&4 .
\end{array}
\]
\item[(C3)]\label{cert:C3} For a root $4$-face, the number of contributors whose unique
nearest $4^+$-face is the root is at most $6$.
\item[(C4)]\label{cert:C4} Let $f$ be a root $4$-face. If exactly six contributors
have $f$ as their unique nearest $4^+$-face and there is one further
contributor, then that further contributor has at least two other nearest
$4^+$-faces, distinct from each other and from $f$.
\item[(C5)]\label{cert:C5} Let $f$ be a root face of degree at least $9$, and let
$e\in E(\partial f)$ be a distinguished boundary edge.  The finite local
upper bound for the load $L_f(e)$ is at most $9/2$.
\item[(C6)]\label{cert:C6} Among simple graphs on eight vertices with $17$ or $18$ edges,
there is no graph that is both planar and non-Hamiltonian.
\end{enumerate}
\end{theorem}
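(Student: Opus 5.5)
The plan is to prove the six assertions separately, treating each as a finite statement about plane patches of bounded size. Each is then certified either by a short hand argument or by an exhaustive enumeration whose search space we first show to be finite.

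The first step, common to all items, is a \emph{finiteness lemma}. In a $C_8$-free plane graph, any region around a root face within dual distance $3$ is controlled. A vertex incident with many consecutive triangles has a long fan, and a fan with at least $8$ triangles yields an $8$-cycle through the fan's rim. More generally, a $2$-connected triangulated disc with at least $8$ vertices is Hamiltonian-rich enough to contain $C_8$. Hence every patch of triangles around the root has fewer than $8$ vertices, and every $4^+$-face of degree $d\le 7$ meets only boundedly many triangles at dual distance $\le 3$.

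\CertOne{} follows directly from this. If all faces within dual distance $3$ of the root triangle are triangles, the union of these faces is a triangulated disc. Such a disc has at most $7$ vertices, since otherwise a Hamiltonian sub-disc on $8$ vertices gives $C_8$. A triangulated disc on at most $7$ vertices with all of its dual $3$-neighbourhood triangular must close up into a triangulation of the sphere, namely $K_4$, the octahedron, or a $\le 7$-vertex triangulation. Such a closed triangulation leaves no room for a face at dual distance $4$.

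\CertSix{} is a plain finite check over the graphs on $8$ vertices with $17$ or $18$ edges. I would do it by hand via Euler's bound plus Chv\'atal-type degree conditions, or cite an enumeration with nauty. A non-Hamiltonian planar graph of this density would need a cut structure, such as a $2$-cut or an independent set of size $>4$. Either of these forces too few edges: the planar bound on the two sides sums below $17$.

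\CertTwo, \CertThree, \CertFour, and \CertFive{} are the hard part. My plan is a computer enumeration, rooted at the face $f$, that grows patches outward in dual BFS layers up to distance $3$. At each step we add a face of some bounded degree. We prune as soon as an $8$-cycle appears, checked by a DFS for cycles of length exactly $8$, since the subgraph property is monotone. We also prune whenever planarity or simplicity fails.

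For each completed patch, we compute $\Near(g)$, $S_f(g)$, the bad count $b(f)$, and the loads $L_f(e)$, and record the maxima. Faces lying beyond the patch boundary can only be at distance $\ge 4$ from $f$. They can therefore only enlarge $\Near(g)$ for triangles whose nearest distance is already $\ge 4$. So we treat the outside adversarially: we assume unknown faces minimize $|\Near(g)|$, which gives an upper bound on the loads.

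The main obstacle is \CertFive{}, because the root degree is unbounded there. To handle it I would localize. Only the triangles that enter through $e$ matter, and these lie within dual distance $3$ of $e$. By the finiteness lemma, such triangles occupy a window of boundedly many boundary edges of $f$ on each side of $e$. So we enumerate the edge $e$ together with a path segment of $\partial f$ of bounded length, leaving $f$ open (degree $\ge 9$), and check that the worst equal-split load is at most $9/2$.

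I expect the bookkeeping of ties in $S_f(g)$ and $\Near(g)$ at this open boundary to be the delicate point. There I would first try the conservative choice of counting every tie as absent.
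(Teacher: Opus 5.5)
Your proposal has genuine gaps. The most basic one is that your ``finiteness lemma'' is false, and both (C1) and your localization for (C5) rest on it.

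A $2$-connected triangulated disc with eight vertices need not contain $C_8$. Draw $K_4$ with outer triangle $abc$ and centre $d$, attach degree-$2$ ears $x,y,z$ on $ab,bc,ca$, and then an ear $w$ on $ax$. The outer boundary is the $7$-cycle $awxbycza$ and all inner faces are triangles. There is no Hamiltonian cycle: the ears $w,y,z$ force both cycle edges at $a$ and both at $c$, so $d$ (whose neighbours are $a,b,c$) has only one usable neighbour.

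So ``otherwise a Hamiltonian sub-disc on $8$ vertices gives $C_8$'' does not bound the radius-three ball. Your next step, that a small ball with triangular $3$-neighbourhood ``must close up'', is not argued at all. In the paper both conclusions of (C1) are outputs of an exhaustive search of all-triangular radius-three patches. That search uses only necessary rejections: repeated faces, a third incidence on an edge, an $8$-cycle, and $E>3V-6$. It finds $85$ completed states, none with an open layer-three edge, each with at most seven vertices.

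Likewise, ``every patch of triangles around the root has fewer than $8$ vertices'' fails trivially for a root of degree at least $9$, so it cannot give a bounded window for (C5). Moreover, triangles entering through $e$ can have their third vertex anywhere on the unseen part of $\partial f$, and they can return to $f$ through other edges. The paper handles exactly this with virtual root subarcs and anchor insertion, with $R$-seals for other root entries, and with a layer-$6$ cutoff audited by a no-limit retry. Finiteness there is a computed fact, not a consequence of a vertex bound.

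For (C2)--(C4), the search depth and your treatment of the outside are wrong. A face at dual distance $4$ from $f$ can share an edge with a layer-$3$ triangle $g$, and it is then strictly closer to $g$ than $f$. So your claim that unseen faces affect only triangles whose nearest distance is already at least $4$ is incorrect. The paper certifies $f\in\Near(g)$, resp.\ $\Near(g)=\{f\}$, for $d^*(f,g)=k\le3$ by closure of all triangles within distance $k-2$, resp.\ $k-1$, of $g$ (Lemma~\ref{lem:radius-five}). This involves faces up to distance $5$, resp.\ $6$, from $f$, which is why its searches run to layer $5$ for (C2) and layer $6$ for (C3) and (C4).

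Treating unseen faces as non-competitors is sound, but it discards exactly the $C_8$ constraints that produce the bounds $7,7,5,4$ and $6$. You give no reason the relaxed maxima stay at these values.

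(C4) cannot be obtained this way at all, because it is a lower bound $|\Near(h)|\ge3$. The paper proves it by checking, in each of the $112$ dangerous states, that the unique witness triangle adjacent to $h$ has two open edges across which no triangle fits in any plane $C_8$-free completion. Their far sides are therefore $4^+$-faces at distance $2$ from $h$. These are distinct from $f$, and distinct from each other by Lemma~\ref{lem:shared-face-path}, which is where $\delta\ge3$ enters.

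Also, non-root $4^+$-faces have unbounded degree, so ``add a face of some bounded degree'' does not give finite branching. The paper never draws these faces; it leaves them as open or $H$-sealed edges and merges all $H$-seals conservatively.

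Finally, your hand argument for (C6) fails on both branches. A $2$-cut permits exactly $(3(n_1+2)-6)+(3(n_2+2)-6)-1=17$ edges when $n_1+n_2=6$. For $\kappa=3$, Chv\'atal--Erd\H{o}s only yields $\alpha\ge4$, and $\alpha=4$ is compatible with $18$ edges (the triakis tetrahedron). Only your enumeration fallback works, and that is what the paper does.
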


\begin{table}[t]
\caption{Use of the finite certificates in the proof.}
\label{tab:certificate-use}
\begin{center}
\small
\renewcommand{\arraystretch}{1.12}
\begin{tabular}{c p{0.46\linewidth} p{0.31\linewidth}}
\toprule
\textbf{Certificate} & \textbf{Certified local fact} & \textbf{Consequence used later}\\
\midrule
\CertOne & radius-three all-triangular neighbourhoods are not contained in a simple plane $C_8$-free graph with a face at dual distance four, and have at most seven vertices
& Lemma~\ref{lem:covering}\\
\CertTwo & small root faces have bad-count maxima $7,7,5,4$
& Lemma~\ref{lem:small-degree}\\
\CertThree & a $4$-face has at most six unique-nearest contributors
& Lemma~\ref{lem:four-load}\\
\CertFour & in the dangerous $4$-face case, the further contributor has two other nearest $4^+$-faces
& Lemma~\ref{lem:four-load}\\
\CertFive & the load $L_f(e)$ through one distinguished edge of a large face is at most $9/2$
& Lemma~\ref{lem:large-load}\\
\CertSix & no planar non-Hamiltonian $8$-vertex graph has $17$ or $18$ edges
& base case of Theorem~\ref{thm:turan}\\
\bottomrule
\end{tabular}
\end{center}
\end{table}

\begin{proposition}[Lifting and domination principle]\label{prop:lifting-domination}
The finite certificates in Theorem~\ref{thm:finite-certificates} apply to every
local configuration arising in a simple 2-connected $C_8$-free plane graph
with minimum degree at least $3$.  More precisely, every triangular
neighbourhood around the specified root needed for \CertOne--\CertFour{} occurs in the
corresponding finite search,
and the finite assertion in \CertFive{} gives an upper bound for the true load
$L_f(e)$ for every boundary edge $e$ of every face $f$ with $d(f)\ge9$.
\end{proposition}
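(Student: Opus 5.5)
The plan is to prove the lifting part by a truncation argument: restrict the global graph to the faces within bounded dual distance of the root. The finite searches then see exactly the same local quantities as the global graph. Let $G$ be a simple 2-connected $C_8$-free plane graph with $\delta(G)\ge3$, and fix a root, either a triangle $g_0$ or a $4^+$-face $f$. For a radius $r$, let $P_r$ be the plane subgraph formed by the boundaries of all faces $h$ with $d^*(h,\text{root})\le r$, with exactly these faces marked as exposed. Because $G$ is 2-connected, every face boundary is a cycle. Hence $P_r$ is a simple plane patch, and each exposed face of $P_r$ is a face of $G$ with the same degree and the same cyclic boundary. Also, $P_r\subseteq G$ is $C_8$-free, since subgraphs of $C_8$-free graphs are $C_8$-free. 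The first step is to check that these are exactly the admissibility conditions imposed by the enumerations described in the Appendix: simplicity, the $C_8$-free condition, face degrees, and the degree lower bound at interior vertices, which $\delta(G)\ge3$ supplies. Once that is checked, every such truncation $P_r$ appears, up to isomorphism, in the corresponding finite search.

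The second step shows that the radius needed for each certificate is bounded, and that dual distances and nearest-face sets computed in $P_r$ agree with those computed in $G$. For a contributor $g$ with $d^*(f,g)\le3$, every shortest dual path from $f$ to $g$ stays inside the ball of radius $3$. So $S_f(g)$ and $d^*(f,g)$ are determined by $P_3$. The set $\Near(g)$ is determined by the ball of radius $d^*(f,g)\le3$ around $g$, which lies inside $P_6$ around $f$.

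For \CertOne, the relevant object is the radius-$3$ all-triangular ball around $g_0$, and it is literally $P_3$. For \CertTwo, bad triangles are at distance at most $3$ and the whole condition is expressed in $P_6$. For \CertThree\ and \CertFour, we first need a lemma that every contributor to a $4$-face lies within bounded distance. We get this from \CertOne: a triangle whose radius-$3$ neighbourhood is all triangular has at most seven vertices around it. Since $G$ contains a $4^+$-face (or the statement is vacuous), such a neighbourhood cannot persist to distance $4$, so every triangle has a nearest $4^+$-face at distance at most $4$. With this bound, all data defining the contributions are read off from a bounded patch. Therefore the global counts equal the counts in a patch that the search enumerates, and the certified maxima transfer to $G$.

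For \CertFive, the claim is that the certified value dominates the true load, not that it equals it. We plan monotonicity arguments. Whatever lies outside the enumerated patch (faces of degree at least the truncation threshold, and unexplored regions) is replaced in the search by the most favourable admissible completion. The search's maximum is taken over all completions consistent with the patch, so the true $L_f(e)$, being one particular completion, is at most the certified bound. Two facts need verifying here: each term $c_{f,e}(g)$ can only decrease when $|\Near(g)|$ or $|S_f(g)|$ grows, and any abstraction the encoding uses (for example, collapsing long stretches of $\partial f$ with $d(f)\ge9$) only enlarges the feasible set.

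I expect this domination step for \CertFive\ to be the main obstacle. It depends on the exact encoding in the Appendix, and in particular on showing that unbounded face degree is faithfully represented by a bounded window around $e$. My first approach would be to show that contributions through $e$ come only from triangles within dual distance $4$ of $e$, which again follows from \CertOne. Only boundary edges of $f$ within a bounded distance of $e$ along $\partial f$ can matter. So the window is finite, and the remainder of $\partial f$ affects only $|S_f(g)|$ and $|\Near(g)|$, both monotonically in the safe direction.
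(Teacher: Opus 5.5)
Your proposal has a genuine gap in both halves.

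\textbf{(C1)--(C4).} You treat the searches as filters: you check static admissibility of the truncation $P_r$ and conclude that it ``appears up to isomorphism''. The searches are generative. A state grows by attaching one triangular face across an open edge, with third vertex either new or an old vertex on the same boundary component. Layers are assigned through a parent edge, and children are discarded by fixed rejection rules. What must be proved is that a genuine patch is \emph{reached} by legal transitions, and this is the missing idea.

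The paper supplies it as follows.
\begin{enumerate}
\item Expose the triangular faces in nondecreasing true dual distance from the root; this order produces the correct layer labels.
\item Planarity forces the third vertex of the triangle across an open edge $uv$ to be either new or on the boundary cycle containing $uv$. These are exactly the two search branches.
\item Each rejection rule is a necessary condition. A shared two-edge facial path is excluded because its middle vertex would have degree $2$, contradicting $\delta(G)\ge3$.
\end{enumerate}
Your $P_r$ also exposes non-root $4^+$-faces, which the searches never list.

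Your auxiliary distance lemma is off by one. (C1) says an all-triangular radius-three ball admits no face at dual distance $4$. So if $G$ has any $4^+$-face, the nearest one is within distance $3$. A bound of $4$ would not suffice, since (C2)--(C4) only count triangles of layer at most $3$. In the paper this bound is Lemma~\ref{lem:covering}, derived from the lifting rather than used inside it.

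\textbf{(C5).} Your claim that only a bounded window of $\partial f$ around $e$ matters, with the rest affecting only $|S_f(g)|$ and $|\Near(g)|$, is false. The face across $e=v_0v_1$ can be a triangle $v_0v_1v_j$ with $v_j\in\partial f$ far from $e$. For instance, take $d(f)=40$ and $j=20$, with a $K_4$ glued outside $f$ on every other boundary edge; the resulting graph is $2$-connected and $C_8$-free, its cycles having lengths $3$, $4$ or at least $20$, and has $\delta\ge3$. Distant boundary vertices therefore enter the patch itself, its sector structure, and the $8$-cycle test. A model that duplicates such a vertex could reject genuine configurations through spurious $8$-cycles.

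The paper instead does the following.
\begin{enumerate}
\item It keeps only $e$ and its two neighbours displayed and uses the rest of $\partial f$ solely through its cyclic order, inserting each encountered boundary vertex in order and never twice.
\item It classifies the far side of each open edge as a triangle, the root face again, or a non-root $4^+$-face.
\item It observes that restoring the omitted boundary can only add shortest entries, create a shorter root route (which kills the contribution through $e$), or add competitors.
\end{enumerate}
You also omit the reachability step. Each intermediate face $T_i$ of a shortest dual path $f,T_1,\dots,T_k=g$ through $e$ is a triangle with $f\in\Near(T_i)$ and $e\in S_f(T_i)$. Hence exposing outward from $e$ lists every positive contributor. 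As written, your (C5) part is a plan rather than an argument.
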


\begin{proof}
For \CertOne--\CertFour, expose triangular faces outward from the chosen root face or
root triangle.  At any stage, consider an edge $uv$ incident with exactly one
already exposed triangular face.  If the face on the other side of $uv$ is
also triangular, then its third vertex is either new to the exposed patch or
already lies on the same boundary cycle of the exposed patch as $uv$.  These
are exactly the two planar ways a triangular face can be added.  Exposing faces
in nondecreasing true dual distance from the root gives an enumerated
representative with the required dual-distance layers.  Any extra patch kept by
the finite search only enlarges the search space, which is harmless because the
certificates give upper bounds.

The only local rejections used in these finite searches are necessary under the
hypotheses: repeated facial triangles, a third facial incidence on one edge,
non-plane boundary order, and an actual $8$-cycle cannot occur in a simple
plane $C_8$-free graph.  When the search excludes two facial triangles
sharing a two-edge boundary path, the internal vertex of that path would have
degree $2$ in the ambient graph; this is impossible under $\delta(G)\ge3$.
Hence no genuine rooted neighbourhood required by \CertOne--\CertFour{} is lost.

For \CertFive, keep a boundary edge $e$ of $f$ and the two
neighbouring boundary edges visible.  The rest of $\partial f$ is used only
through its cyclic order.  A shortest path from $f$ to a triangle
contributing through $e$ begins across $e$, so every contributing triangle
is reached by successively exposing genuine triangular faces from this visible
three-edge part of $\partial f$.  If an edge incident with exactly one
exposed face can affect the contribution through $e$, then the face on its
other side is either triangular, the root face again, or a non-root
$4^+$-face; these are precisely the alternatives covered by the finite
assertion in \CertFive.  Restoring the omitted part of the root boundary can only
add shortest entries, create a shorter root route through another entry, or add
further non-root competitors.  Each of these changes enlarges a denominator or
makes the contribution through $e$ vanish.  Thus the certified quantity in
\CertFive{} dominates the true sum $L_f(e)$.
\end{proof}

\begin{lemma}[Covering by nearby $4^+$-faces]\label{lem:covering}
Let $G$ be a simple 2-connected $C_8$-free plane graph with
$\delta(G)\ge 3$ and $|V(G)|\ge 8$.  Then every 3-face $g$ of $G$
has a $4^+$-face at dual distance at most $3$.  In particular,
$\Near(g)\ne\emptyset$, and $g$ is $f$-bad for every $f\in\Near(g)$.
\end{lemma}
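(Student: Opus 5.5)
We argue by contradiction using \CertOne{} together with Proposition~\ref{prop:lifting-domination}. Fix a 3-face $g$ and suppose that no $4^+$-face of $G$ lies at dual distance at most $3$ from $g$. Then every face at dual distance at most $3$ from $g$ is triangular. Let $P$ be the plane patch formed by $g$ and all faces at dual distance at most $3$ from $g$, with $g$ as root triangle. By Proposition~\ref{prop:lifting-domination}, this rooted triangular neighbourhood occurs in the finite search of \CertOne, because $G$ is simple, 2-connected, $C_8$-free and has $\delta(G)\ge3$. So \CertOne{} applies to $P$, and $P$ is contained in $G$, a simple plane $C_8$-free graph.

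We split into two cases according to whether $G$ has a face at dual distance exactly $4$ from $g$.

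\emph{Case 1: such a face exists.} This directly contradicts the first conclusion of \CertOne.

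\emph{Case 2: no such face exists.} Since $G^*$ is connected, every face of $G$ then lies within dual distance $3$ of $g$, and every such face is a triangle. Hence all faces of $G$ belong to $P$, and every vertex of $G$ lies on some face, so $V(G)=V(P)$. The second conclusion of \CertOne{} gives $|V(G)|\le 7$, contradicting $|V(G)|\ge8$.

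Both cases are impossible, so $g$ has a $4^+$-face at dual distance at most $3$. In particular $G$ has a $4^+$-face, so $\Near(g)\neq\emptyset$. The minimum dual distance from $g$ to a $4^+$-face is at most $3$, so every $f\in\Near(g)$ satisfies $d^*(f,g)\le3$. Since also $f\in\Near(g)$, the triangle $g$ is $f$-bad by Definition~\ref{def:nearest-load}.

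The main obstacle is Case 2: we must justify that $P$ really is all of $G$, so that the vertex bound of \CertOne{} applies to $G$ itself. This needs the facts that every vertex lies on a face and that the dual graph is connected. We also have to check that the patch $P$ is exactly the kind of configuration that Proposition~\ref{prop:lifting-domination} guarantees is covered by the search. Everything else is bookkeeping with the definitions.
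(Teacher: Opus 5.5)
Your proposal is correct and follows the paper's argument. You assume the radius-three dual ball is all-triangular, transfer it to the \CertOne{} search via Proposition~\ref{prop:lifting-domination}, exclude a face at dual distance $4$, and use connectivity of $G^*$ to conclude that the ball is all of $G$, giving $|V(G)|\le 7$. Your explicit two-case split just spells out the paper's one-line remark that a vertex outside the neighbourhood would create a face at dual distance $4$.
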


\begin{proof}
Suppose that a triangular face $g$ has no $4^+$-face within dual distance
$3$.  Then the radius-three neighbourhood of $g$ consists only of
triangular faces.  By Proposition~\ref{prop:lifting-domination} this
neighbourhood is represented in the finite search certified in \CertOne, and \CertOne{}
says that this neighbourhood is not contained in any simple plane
$C_8$-free graph with a face at dual distance $4$ and that the neighbourhood
has at most seven vertices.  A vertex of $G$ outside this neighbourhood would
create such an additional face.  Hence
$|V(G)|\le7$, a contradiction.  The remaining assertions follow from the
definitions.
\end{proof}

\begin{lemma}[Small root-face bounds]\label{lem:small-degree}
Let $G$ be a simple 2-connected $C_8$-free plane graph with
$\delta(G)\ge 3$ and $|V(G)|\ge 8$, and let $f$ be a $4^+$-face of degree
$d\in\{4,5,6,7\}$.  Then
\[
\begin{array}{c|cccc}
d&4&5&6&7\\
\hline
b(f)&\le 7&\le 7&\le 5&\le 4.
\end{array}
\]
\end{lemma}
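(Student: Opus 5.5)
The plan is to obtain Lemma~\ref{lem:small-degree} directly from the finite certificate \CertTwo{}. The work consists of checking that every triangle counted in $b(f)$ shows up in the finite search that \CertTwo{} certifies. Proposition~\ref{prop:lifting-domination} provides exactly this lifting.

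Fix $f$ of degree $d\in\{4,5,6,7\}$ and let $g$ be $f$-bad. By definition $f\in\Near(g)$ and $d^*(f,g)\le 3$. Choose a shortest dual path $f=h_0,h_1,\dots,h_k=g$ with $k\le 3$. Each intermediate face $h_i$ with $1\le i<k$ is triangular: if some $h_i$ were a $4^+$-face, it would lie at dual distance less than $k$ from $g$, contradicting $f\in\Near(g)$. Consequently every bad triangle sits in the region obtained from $f$ by exposing triangular faces outward through at most three layers, with each layer entered across an edge that borders exactly one exposed face.

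Because $G$ is simple, 2-connected, $C_8$-free and has $\delta(G)\ge3$, Proposition~\ref{prop:lifting-domination} applies. It shows that this rooted neighbourhood of $f$, together with the labelling of which triangles are bad contributors, is represented in the finite enumeration for root degree $d$. The search may also record faces that are not genuine, or may overcount, but that only enlarges the maximum it computes. Hence $b(f)$ is at most the certified maximum, which gives $7,7,5,4$ for $d=4,5,6,7$.

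One subtlety needs care. Membership $f\in\Near(g)$ depends on other $4^+$-faces, possibly outside the patch. Faces left unexposed can only strengthen competition, either by making some other $4^+$-face nearer or by adding ties. Under the definition of $b$, a tie still leaves $g$ counted as $f$-bad, so the certified count is not decreased by the omission. I expect this to be the main obstacle: making precise that the search's notion of "bad contributor" is at least as permissive as the true one, so that the domination points in the right direction. Everything else is bookkeeping.

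Lemma~\ref{lem:covering} is not needed for the bound itself, although it guarantees that $\Near(g)\neq\emptyset$.
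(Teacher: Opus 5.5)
Your overall route is the paper's: reduce to the certified maxima of \CertTwo{} through Proposition~\ref{prop:lifting-domination}. The gap is in the one step with real content. You flag it as the main obstacle, but the monotonicity heuristic you use to settle it points at the wrong mechanism.

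You place each $f$-bad $g$ only in the radius-three triangular region around $f$. You then argue that faces left unexposed can only create competitors, so the count is ``not decreased by the omission''. That would give domination for an optimistic count, one that declares a listed triangle bad unless a visible face beats $f$. It is not the quantity behind \CertTwo{}. The search counts a layer-$k$ triangle only after every listed triangle within dual distance $k-2$ of it is closed, i.e.\ only after it has verified that no closer $4^+$-face can exist. For such a conservative count, a patch that is too small undercounts. So knowing that the radius-three neighbourhood occurs in the search does not bound $b(f)$. Likewise, Proposition~\ref{prop:lifting-domination} transfers rooted neighbourhoods, not ``the labelling of which triangles are bad contributors''. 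That labelling is exactly what has to be justified.

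The missing idea is the witness-radius argument. The paper states it as ``these witnesses lie within the finite neighbourhood used in \CertTwo{}'' and makes it precise in Lemma~\ref{lem:radius-five}. Suppose $g$ is genuinely $f$-bad with $k=d^*(f,g)\le 3$. Then no triangular face $t$ with $d^*(g,t)\le k-2$ can have a non-root edge whose other side is a $4^+$-face, because that face would be at dual distance at most $k-1<k$ from $g$. So every edge of such a $t$ borders $f$ or a triangle, and these triangles lie within dual distance $k+(k-1)\le 5$ of $f$.

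The search does enumerate the genuine radius-five patch: layers are breadth-first, the shared-path rejection is justified by $\delta(G)\ge3$, and small closed states are discarded because $|V(G)|\ge 8$. In that patch every $f$-bad triangle passes the closure test and is counted. Only then does the state maximum $7,7,5,4$ bound $b(f)$.

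Your side remark is correct: Lemma~\ref{lem:covering} is not needed here, since $d^*(f,g)\le 3$ is part of the definition of $f$-bad.
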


\begin{proof}
By Lemma~\ref{lem:covering}, only triangles at dual distance at most $3$ from
$f$ can be $f$-bad.  To certify that such a triangle is truly $f$-bad, it
is enough to resolve the triangular witnesses that could lead to a closer
$4^+$-face.  These witnesses lie within the finite neighbourhood used
in \CertTwo.  Proposition~\ref{prop:lifting-domination} transfers every genuine
neighbourhood to the finite search certified in \CertTwo, and \CertTwo{} gives the displayed
maxima.
\end{proof}

\begin{lemma}[Refined four-face equal-split load]\label{lem:four-load}
If $G$ satisfies the hypotheses above and $f$ is a 4-face, then
\[
  \ell(f)\le \frac{19}{3}.
\]
\end{lemma}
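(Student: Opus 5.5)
We bound $\ell(f)$ by sorting the contributors to $f$ by the size of their nearest-face set, and then using \CertTwo, \CertThree{} and \CertFour{} on the different groups. By Lemma~\ref{lem:covering}, every triangle $g$ with $f\in\Near(g)$ has $d^*(f,g)\le 3$, so every contributor to $f$ is $f$-bad. Hence the number of contributors equals $b(f)$, and Lemma~\ref{lem:small-degree} gives $b(f)\le 7$. Let $u$ be the number of contributors with $|\Near(g)|=1$, and let $m=b(f)-u$ be the number of the others. Each of these others has $|\Near(g)|\ge 2$ and so contributes at most $1/2$. Therefore
\[
  \ell(f)\le u+\tfrac12\,m .
\]
By \CertThree, transferred to $G$ by Proposition~\ref{prop:lifting-domination}, we have $u\le 6$.

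We then split into cases according to $u$.

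\textbf{Case $u\le 5$.} Here $m\le 7-u$, so
\[
  \ell(f)\le u+\tfrac12(7-u)=\tfrac{7+u}{2}\le 6<\tfrac{19}{3}.
\]

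\textbf{Case $u=6$.} Here $b(f)\le 7$ forces $m\le 1$.
\begin{itemize}
\item If $m=0$, then $\ell(f)=6$.
\item If $m=1$, we are exactly in the situation of \CertFour: six unique-nearest contributors and one further contributor $g$. Again via Proposition~\ref{prop:lifting-domination}, \CertFour{} says that $\Near(g)$ contains $f$ together with two further distinct $4^+$-faces. Hence $|\Near(g)|\ge 3$, so $g$ contributes at most $1/3$, and $\ell(f)\le 6+\tfrac13=\tfrac{19}{3}$.
\end{itemize}

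This case analysis is exhaustive and gives the stated bound.

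The main obstacle is justifying that the certificates apply verbatim to $G$. Three points need care.
\begin{itemize}
\item The contributors counted by $\ell(f)$ must be exactly the $f$-bad triangles counted in \CertTwo. This is where Lemma~\ref{lem:covering} is used, and it needs $|V(G)|\ge 8$.
\item The hypothesis of \CertFour{} must match the case $u=6$, $m=1$. The phrase ``one further contributor'' has to be read as ``exactly one further'', which is forced by $b(f)\le 7$.
\item The nearest-face sets computed in the finite patch must agree with those computed in $G$.
\end{itemize}
For the last point, I would argue as follows. The only way the ambient graph can differ from the patch is by revealing additional $4^+$-faces, or closer ones. Such faces can only increase $|\Near(g)|$ or remove $f$ from $\Near(g)$, and either change decreases each term of $\ell(f)$. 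Consequently the counts $u$ and $m$ as computed in the patch, together with the resulting bound, dominate the true values. This is the domination reasoning of Proposition~\ref{prop:lifting-domination}.
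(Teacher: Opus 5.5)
Your proposal is correct and follows the paper's proof essentially step for step: at most seven contributors (via Lemma~\ref{lem:covering} and Lemma~\ref{lem:small-degree}), $u\le 6$ from \CertThree, the bound $\ell(f)\le 6$ when $u\le 5$, and \CertFour{} to cap the single non-unique contributor at $1/3$ when $u=6$. Your explicit bound $(7+u)/2\le 6$ and your remark that every contributor is $f$-bad make explicit what the paper leaves implicit, and the paper transfers the certificates to $G$ exactly as you do, by citing Proposition~\ref{prop:lifting-domination}.
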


\begin{proof}
By Lemma~\ref{lem:small-degree}, the face $f$ has at most seven contributors.
Let $u$ be the number of contributors whose unique nearest $4^+$-face is
$f$.  Certificate \CertThree, together with Proposition~\ref{prop:lifting-domination},
gives $u\le6$.

If $u\le5$, then the unique-nearest contributors add at most $5$, and each
remaining contributor has at least two nearest $4^+$-faces and contributes at
most $1/2$.  Hence
\[
  \ell(f)\le 5+\frac{7-5}{2}=6.
\]
If $u=6$ and there is no seventh contributor, then $\ell(f)=6$.  If a
seventh contributor exists, \CertFour{} gives two other nearest $4^+$-faces for
that triangle, distinct from each other and from $f$.  Its contribution to
$\ell(f)$ is therefore at most $1/3$, and
\[
  \ell(f)\le 6+\frac13=\frac{19}{3}.
\]
\end{proof}

\begin{lemma}[Large-face load]\label{lem:large-load}
Let $G$ be a simple $2$-connected $C_8$-free plane graph with
$\delta(G)\ge3$ and $|V(G)|\ge8$.  If $f$ is a face of degree at least
$9$, then
\[
  \ell(f)\le \frac92 d(f).
\]
\end{lemma}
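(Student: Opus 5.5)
The plan is to reduce the global bound $\ell(f)\le \frac92 d(f)$ to a per-edge bound, using the edge decomposition of the load recorded just after Definition~\ref{def:nearest-load}. That identity gives
\[
  \ell(f)=\sum_{e\in E(\partial f)}L_f(e).
\]
Since $G$ is $2$-connected, $\partial f$ is a cycle, so $E(\partial f)$ has exactly $d(f)$ edges. It therefore suffices to prove $L_f(e)\le 9/2$ for every boundary edge $e$ of $f$, and then sum over the $d(f)$ boundary edges.

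To justify the decomposition identity, I would check it triangle by triangle. Fix a triangle $g$ with $f\in\Near(g)$. By Lemma~\ref{lem:covering}, $\Near(g)\neq\emptyset$ and $d^*(f,g)$ is finite, so some shortest dual path from $f$ to $g$ exists. Such a path must leave $f$ across some boundary edge, so $S_f(g)\neq\emptyset$ and the coefficients $c_{f,e}(g)$ are well defined. Summing $c_{f,e}(g)$ over $e\in E(\partial f)$ gives $1/|\Near(g)|$, which is exactly the summand of $g$ in $\ell(f)$. For triangles with $f\notin\Near(g)$, all the coefficients $c_{f,e}(g)$ vanish. Hence the identity holds.

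The per-edge bound comes from \CertFive{} via Proposition~\ref{prop:lifting-domination}. The hypotheses of the lemma (simple, $2$-connected, $C_8$-free, $\delta(G)\ge3$) are precisely those under which the proposition applies. The proposition states that, for every face $f$ with $d(f)\ge9$ and every boundary edge $e$, the certified quantity in \CertFive{} dominates the true load $L_f(e)$. Combining this with \CertFive{} gives $L_f(e)\le 9/2$ for every $e$, and summing yields the lemma.

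The main obstacle is the domination step, that is, ensuring that the truncated local picture used in \CertFive{} (the edge $e$, its two neighbouring boundary edges, and the exposed triangular layers) really upper-bounds $L_f(e)$. The reasoning I would rely on is the one in the proof of Proposition~\ref{prop:lifting-domination}. Restoring the hidden part of $\partial f$ can only enlarge $|S_f(g)|$, enlarge $|\Near(g)|$, or shorten the distance via another entry. In the last case $e\notin S_f(g)$, and the contribution through $e$ vanishes. None of these changes can increase any $c_{f,e}(g)$.

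One further point to check is that no triangle contributing through $e$ lies beyond the exposed region. By Lemma~\ref{lem:covering}, each contributor has some $4^+$-face at dual distance at most $3$. Hence $d^*(f,g)\le3$, so the relevant triangles are reached within three layers from $e$, and this bounded region is what the search in \CertFive{} covers.
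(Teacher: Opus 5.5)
Your proposal is correct and follows the paper's proof. Like the paper, you obtain $L_f(e)\le 9/2$ for every boundary edge from \CertFive{} together with Proposition~\ref{prop:lifting-domination}, and then sum over the $d(f)$ edges of $\partial f$ using $\ell(f)=\sum_{e\in E(\partial f)}L_f(e)$; your triangle-by-triangle check of that identity (using $\Near(g)\neq\varnothing$ from Lemma~\ref{lem:covering} and $S_f(g)\neq\varnothing$) makes explicit a step the paper states without proof.
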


\begin{proof}
By \CertFive{} and Proposition~\ref{prop:lifting-domination}, every boundary edge
$e\in E(\partial f)$ satisfies $L_f(e)\le9/2$.  Summing over all boundary
edges and using $\ell(f)=\sum_{e\in E(\partial f)}L_f(e)$ gives
\[
  \ell(f)\le \sum_{e\in E(\partial f)}\frac92=\frac92 d(f).
\]
\end{proof}

\begin{lemma}[Eight-vertex base case]\label{lem:n8-hamiltonian}
Every simple planar graph on eight vertices with $17$ or $18$ edges is
Hamiltonian.
\end{lemma}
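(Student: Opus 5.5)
The plan is to derive Lemma~\ref{lem:n8-hamiltonian} directly from certificate \CertSix{} of Theorem~\ref{thm:finite-certificates}. \CertSix{} already asserts that no simple graph on eight vertices with $17$ or $18$ edges is both planar and non-Hamiltonian, and this is just the contrapositive form of the lemma. So the proof is essentially a restatement. The only care needed is that ``Hamiltonian'' is read as containing a spanning cycle, i.e.\ a (not necessarily induced) $8$-cycle subgraph. That is exactly the notion the base case of Theorem~\ref{thm:turan} needs: when $n=8$ we have $\frac{69}{25}\cdot 6=16.56$, so a $C_8$-free planar graph would have to have at most $16$ edges.

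To make the lemma more than a pointer to a computation, I would also sketch a human-checkable route; it could serve as a cross-check or replace the certificate. An $8$-vertex planar graph has at most $3\cdot 8-6=18$ edges.
\begin{itemize}
\item \emph{$18$ edges.} The graph is a triangulation, hence $3$-connected. Whitney's theorem applies whenever such a triangulation has no separating triangle. With $8$ vertices, the only way to get a separating triangle with vertices on both sides is to place a few vertices inside a triangle. In those cases I would route a Hamiltonian cycle by hand, splicing a path through the interior vertices into a Hamiltonian cycle of the outer part.
\item \emph{$17$ edges.} The graph is a triangulation minus one edge, so it has one quadrilateral face and minimum degree at least $2$. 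A degree-$2$ vertex would force the remaining $7$ vertices to carry $15=3\cdot 7-6$ edges, a triangulation on $7$ vertices. We then handle separately the case of a degree-$2$ vertex and the case $\delta\ge 3$. For $\delta\ge3$, adding the diagonal of the quadrilateral gives a triangulation. I would take its Hamiltonian cycle and, if that cycle uses the added diagonal, reroute it through the quadrilateral's other two vertices.
\end{itemize}

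I expect this hand case analysis, especially the rerouting step, to be the only real obstacle. In the $17$-edge case it can fail locally, and a small catalogue of $8$-vertex triangulations (there are $14$ of them) would be needed. For that reason, the proof I would actually write rests on the exhaustive enumeration in \CertSix{}, which is recorded reproducibly in the Appendix. I would then note that the lemma follows at once, with no further argument beyond unpacking the definition of Hamiltonicity.
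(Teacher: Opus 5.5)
Your proposal is correct and matches the paper's proof: the lemma is exactly certificate (C6) read contrapositively, together with the observation that on eight vertices a Hamiltonian cycle is precisely a copy of $C_8$. You are right to rely on the enumeration rather than the hand sketch, since its rerouting step does not work as stated: if the Hamiltonian cycle of the triangulation uses the added diagonal, the quadrilateral's other two vertices already lie elsewhere on the cycle, so no local reroute is available.
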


\begin{proof}
This is exactly \CertSix.  On eight vertices, a Hamiltonian cycle is a copy of
$C_8$, so the certificate rules out every planar $C_8$-free graph with
$17$ or $18$ edges.
\end{proof}

\section{Proof of the main theorem}\label{sec:main-proof}

The local inputs and the mathematical reductions needed for discharging were
proved in Section~\ref{sec:local-inputs}.  We now use them to prove the main
theorem.
\begin{proof}[Proof of Theorem~\ref{thm:turan}]
Put $c=69/25$.  We prove by induction on $n$ that every $n$-vertex
simple $C_8$-free planar graph has at most $c(n-2)$ edges, for $n\ge8$.
For $n=8$, $c(8-2)=414/25<17$, while every eight-vertex simple planar graph
has at most $18$ edges.  Lemma~\ref{lem:n8-hamiltonian} therefore implies
that any eight-vertex planar graph with more than $c(8-2)$ edges is
Hamiltonian, and hence contains a copy of $C_8$.

Let $G$ be a counterexample with the minimum possible number $n$ of
vertices.  Every subgraph used below remains simple, planar, and $C_8$-free.
For a smaller graph $H$ of order $m$, use induction if $m\ge8$, and use
the elementary bound $p(1)=0$, $p(2)=1$, $p(m)=3m-6$ for $3\le m\le7$.
In the latter range $p(m)\le c(m-1)$.

If $G$ is disconnected, with component orders $n_1,\ldots,n_t$, then
\[
  e(G)\le c\sum_i(n_i-1)=c(n-t)\le c(n-2),
\]
a contradiction.  If $G$ has a cut vertex, write
$G=G_1\cup G_2$, $G_1\cap G_2=\{x\}$, and $|V(G_i)|=n_i$, so
$n_1+n_2=n+1$.  If both $n_i\ge8$, induction gives
$e(G)\le c(n_1-2)+c(n_2-2)=c(n-3)<c(n-2)$.  If
$n_1\le7\le n_2$, then $e(G)\le c(n_1-1)+c(n_2-2)=c(n-2)$.  If both
$n_i\le7$, then $p(n_1)+p(n_2)\le c(n_1+n_2-3)=c(n-2)$: for
$n_1,n_2\ge3$ this is $3(n_1+n_2)-12\le c(n_1+n_2-3)$, with
$n_1+n_2\le14$; if one side has order $2$, it reduces to
$3m-5\le c(m-1)$ for $3\le m\le7$; and if both have order $2$, it is
$2\le c$.  Thus $G$ is 2-connected.

If $G$ has a vertex $v$ of degree at most $2$, then induction gives
$e(G)\le c(n-3)+2\le c(n-2)$ when $n-1\ge8$, since $c>2$; the remaining
case is $n=8$, already covered.  Hence $\delta(G)\ge3$.  We may therefore
assume that $G$ is simple, 2-connected, $C_8$-free, $\delta(G)\ge3$, and
$n\ge9$, so Lemma~\ref{lem:covering} applies.

Fix a plane embedding, and let $F$ be the set of faces.  Give each face
initial charge $\mu(f)=2d(f)-4$.  Euler's formula gives
\[
  \sum_{f\in F}\mu(f)=2\sum_{f\in F}d(f)-4|F|=4e(G)-4|F|=4n-8.
\]
Set $\alpha=50/69$.  Each face sends charge $\alpha$ to each incident
vertex.  Since $G$ is 2-connected, each face boundary is a cycle, so the
total charge sent to vertices is $2\alpha e(G)$, and the residual face charge
is $\mu^*(f)=(2-\alpha)d(f)-4$.

Now each triangular face $g$ receives its required compensation equally from
the faces in $\Near(g)$: every $4^+$-face $f\in\Near(g)$ sends
$(3\alpha-2)/|\Near(g)|=(4/23)/|\Near(g)|$ to $g$.  Since
$3(2-\alpha)-4=2-3\alpha=-4/23$, every triangular face has final charge
$0$.

Let $f$ be a $4^+$-face of degree $d$, and write $\mu^\times(f)$ for
its final face charge.  From the second discharging step,
\[
  \mu^\times(f)=(2-\alpha)d-4-\frac{4}{23}\ell(f)
  =\frac{88}{69}d-4-\frac{4}{23}\ell(f).
\]
If $d=4$, Lemma~\ref{lem:four-load} gives
$\mu^\times(f)\ge (88/69)4-4-(4/23)(19/3)=0$.  For $d=5,6,7$,
Lemma~\ref{lem:small-degree} gives $\ell(f)\le b(f)\le 7,5,4$, respectively;
the first inequality follows from Lemma~\ref{lem:covering}.  Thus the
corresponding lower bounds for $\mu^\times(f)$ are $80/69$, $192/69$,
and $292/69$.  A face of degree $8$ cannot occur, because its boundary
would be an 8-cycle.  Finally, if $d\ge9$, Lemma~\ref{lem:large-load} gives
$\ell(f)\le9d/2$, and hence
\[
  \mu^\times(f)\ge \frac{88}{69}d-4-\frac{4}{23}\cdot\frac{9d}{2}
  =\frac{34d}{69}-4\ge \frac{30}{69}>0.
\]
Thus every face has nonnegative final face charge.

Charge is conserved, so $4n-8=2\alpha e(G)+\sum_{f\in F}\mu^\times(f)\ge
2\alpha e(G)$.  Therefore
\[
  e(G)\le \frac{2}{\alpha}(n-2)=\frac{69}{25}(n-2),
\]
contradicting the choice of $G$.  The theorem follows.
\end{proof}

\section*{Declaration on AI-assisted tools}
OpenAI Codex was used only as an assistive tool for implementing and checking
computer programs used in the finite computations.

\section*{Data and code availability}
The source codes, recorded outputs, machine-readable certificates, and
reproduction scripts are available on the homepage of the corresponding
author: \url{https://web.xidian.edu.cn/zhangxin/papers.html}.

\bibliographystyle{plainurl}
\begingroup
\hbadness=10000
\IfFileExists{c8_planar_turan_ref.bib}
  {\bibliography{c8_planar_turan_ref}}
  {\bibliography{manuscript/c8_planar_turan_ref}}

\begin{thebibliography}{10}

\bibitem{CranstonLidickyLiuShantanam2022}
D.~W. Cranston, B.~Lidick{\'y}, X.~Liu, and A.~Shantanam.
\newblock Planar {Tur{\'a}n} numbers of cycles: a counterexample.
\newblock {\em Electron. J. Combin.}, 29(3):P3.31, 2022.
\newblock \href {http://dx.doi.org/10.37236/10774} {\path{doi:10.37236/10774}}.

\bibitem{Dowden2016}
C.~Dowden.
\newblock Extremal {$C_4$}-free/{$C_5$}-free planar graphs.
\newblock {\em J. Graph Theory}, 83(3):213--230, 2016.
\newblock \href {http://dx.doi.org/10.1002/jgt.21991}
  {\path{doi:10.1002/jgt.21991}}.

\bibitem{GhoshGyoriMartinPaulosXiao2022}
D.~Ghosh, E.~Gy{\H{o}}ri, R.~R. Martin, A.~Paulos, and C.~Xiao.
\newblock Planar {Tur{\'a}n} number of the 6-cycle.
\newblock {\em SIAM J. Discrete Math.}, 36(3):2028--2050, 2022.
\newblock \href {http://dx.doi.org/10.1137/21M140657X}
  {\path{doi:10.1137/21M140657X}}.

\bibitem{GyoriLiZhou2023}
E.~Gy{\H{o}}ri, A.~Li, and R.~Zhou.
\newblock The planar {Tur{\'a}n} number of the seven-cycle.
\newblock arXiv:2307.06909v2, 2023.
\newblock \href {http://arxiv.org/abs/2307.06909v2}
  {\path{arXiv:2307.06909v2}}, \href
  {http://dx.doi.org/10.48550/arXiv.2307.06909}
  {\path{doi:10.48550/arXiv.2307.06909}}.

\bibitem{LanShiSong2019}
Y.~Lan, Y.~Shi, and Z.-X. Song.
\newblock Extremal theta-free planar graphs.
\newblock {\em Discrete Math.}, 342(12):111610, 2019.
\newblock \href {http://dx.doi.org/10.1016/j.disc.2019.111610}
  {\path{doi:10.1016/j.disc.2019.111610}}.

\bibitem{LanShiSong2021}
Y.~Lan, Y.~Shi, and Z.-X. Song.
\newblock Planar {Tur{\'a}n} number and planar anti-{R}amsey number of graphs.
\newblock {\em Oper. Res. Trans.}, 25(3):201--216, 2021.
\newblock \href {http://dx.doi.org/10.15960/j.cnki.issn.1007-6093.2021.03.013}
  {\path{doi:10.15960/j.cnki.issn.1007-6093.2021.03.013}}.

\bibitem{ShiWalshYuDense}
R.~Shi, Z.~Walsh, and X.~Yu.
\newblock Dense circuit graphs and the planar {Tur{\'a}n} number of a cycle.
\newblock {\em J. Graph Theory}, 108(1):27--38, 2025.
\newblock \href {http://dx.doi.org/10.1002/jgt.23165}
  {\path{doi:10.1002/jgt.23165}}.

\bibitem{ShiWalshYuC7}
R.~Shi, Z.~Walsh, and X.~Yu.
\newblock Planar {Tur{\'a}n} number of the 7-cycle.
\newblock {\em European J. Combin.}, 126:104134, 2025.
\newblock \href {http://dx.doi.org/10.1016/j.ejc.2025.104134}
  {\path{doi:10.1016/j.ejc.2025.104134}}.

\bibitem{Turan1941}
P.~Tur{\'a}n.
\newblock On an extremal problem in graph theory.
\newblock {\em Mat. Fiz. Lapok}, 48:436--452, 1941.

\bibitem{Wagner1937}
K.~Wagner.
\newblock {\"U}ber eine {Eigenschaft} der ebenen {Komplexe}.
\newblock {\em Math. Ann.}, 114:570--590, 1937.
\newblock \href {http://dx.doi.org/10.1007/BF01594196}
  {\path{doi:10.1007/BF01594196}}.

\end{thebibliography}
\endgroup

\appendix

\phantomsection
\section*{Appendix. Verification details for the local inputs}\label{sec:local-proofs}
\addcontentsline{toc}{section}{Appendix. Verification details for the local inputs}
\setcounter{subsection}{0}
\renewcommand{\thesubsection}{A\arabic{subsection}}

This appendix gives the verification details for the six finite local
certificates \CertOne--\CertSix{} stated in
Theorem~\ref{thm:finite-certificates}.  For each search it specifies the finite
search data, the allowed extensions, and the recorded output from which the
corresponding local bound follows.  The local lemmas are proved in the main
text; the material here supplies the reproducible implementation and audit
details.  The complete verification suite took $854.7$ seconds, about
$14.3$ minutes, on the hardware specified for the computation.

\subsection{Search states and rooted patches}

This subsection fixes the common finite-patch language used in the
verifications of certificates \CertOne--\CertSix.  The subsequent subsections
apply this language to the six local certificates stated in
Theorem~\ref{thm:finite-certificates}.

We now give the formal data stored by the finite searches.  The root is the
distinguished face specified in the relevant certificate.  A layer is a
dual-distance layer measured from that root.  The terms open edge, closed edge,
and virtual side are defined in this subsection.  Each unfilled complementary
region is represented by its boundary in cyclic order.  Ordinary boundaries are
simple cycles, so a listed boundary vertex occurs only once on its component.
Root edges are not open witness edges for a competing $4^+$-face, because the
root face itself already occupies one side.

\begin{definition}[\textit{Rooted search state}]
A \defterm{rooted search state} consists of a listed plane patch $P$, one
distinguished root face, and one boundary record for each unfilled
complementary region of $P$.  An \defterm{ordinary boundary component} is a
cyclic list of at least three distinct vertices.  In the root-edge model, parts
of the omitted portion of the root boundary, called the \defterm{omitted root
arc}, may instead be recorded as virtual sides.  When a newly encountered ambient vertex lies on such a
side, the virtual-anchor branch subdivides it into two ordered virtual
subarcs.  If a real edge and one virtual subarc enclose a sector with only
their two endpoints currently listed, the search retains a
\defterm{compressed real--virtual component}; it does not discard that
unfilled sector.

Each listed triangle carries an \defterm{exposure layer}, assigned through the
parent edge by which the triangle is added.  For the ordinary searches, a
genuine patch has a retained breadth-first exposure in which this label equals
the true ambient dual distance from the root.  Other retained exposure orders
may carry larger labels and merely enlarge the search.  In the sealed search
the stored value is the construction layer.  New adjacencies and $R$-seals
may later shorten the sealed root distance, so candidate and load decisions
use a fresh sealed-dual breadth-first computation.  Witness exposure uses a
separate breadth-first computation in the root-and-triangles incidence graph,
which records exactly the triangular interior of a path to its first
competing $4^+$-face.

A real boundary edge is open when it is incident with exactly one listed face
and closed when it is incident with two.  An open edge carrying an $H$- or
$R$-seal is resolved, while an $O$-seal does not resolve an edge.  A new
triangle is attached only across an unresolved open real edge; its third
vertex is either new or already lies on the same boundary component.  A
\defterm{state key} is a deterministic record
used to suppress duplicate descriptions.  Two states are merged only when
the key shows that they have the same available boundary extensions and the
  same counting data.  Keeping additional isomorphic copies is allowed.

The complete state record is the tuple
\[
 (V_P,E_P,\mathcal T,\rho,\mathcal B,\mathcal V,\sigma).
\]
Here $\mathcal T$ is the list of oriented facial triangles; $\rho$ is the
root record and all layer data; $\mathcal B$ is the ordered collection of
boundary records; $\mathcal V$ gives the ordered virtual subarcs and their
endpoints; and $\sigma$ records the $H/R/O$-seals on real sides.  The
parent edge used when a layer is assigned is transition data: it determines
the child layer and is verified as part of successor legality, but it is not
an additional stored field of a terminal sealed state.  A transition chooses
one unresolved real boundary edge and either attaches a triangle using a new
vertex, an old vertex on the same component, or a virtual anchor on its unique
virtual subarc, or records its true second side by an $H$- or $R$-seal.
The two neighbouring displayed root edges may receive the nonblocking
$O$-marker.  A child is retained exactly when the explicit rejection rules
in Proposition~\ref{prop:search-state-domination} do not apply.
\end{definition}

\paragraph*{Relevance predicate.}
The exact deterministic predicate is as follows.  Construct $D_S$, its root
distances $r_i$, merged-$H$ distances $h_i$, and displayed-entry counts
$p_i$.  Put
\[
 C(S)=\{(i,r_i):r_i\le3,\ h_i\ge r_i,\ p_i>0\},
\]
as a set of eligible listed triangles.  Let $O(S)$ be the set of real
boundary edges having one listed incidence and carrying no $H$- or
$R$-seal, keyed by their sorted endpoint pairs.  If
$C(S)=\varnothing$, the selected edge is the
lexicographically least member of $O(S)$ among the displayed root-window
edges $30,01,12$; if none exists, the state is terminal.  Otherwise, for
each $(i,k)\in C(S)$, insert into an eligible-edge set every edge of every
listed triangle $T_j$ whose distance from $T_i$ in the
root-and-listed-triangles incidence graph is at most $k-1$, provided that
edge belongs to $O(S)$.  The selected edge is the lexicographically least
edge in this eligible-edge set.  If the set is empty, the state is terminal.
Construction layer is not used in selecting the edge.

\begin{lemma}[Faithful cut-open root-arc model]
\label{lem:faithful-cut-open}
Fix a face $f$, a boundary edge $e$, and a genuine local configuration
around them.  Retain $e$ and its two neighbouring root edges.  Let $Q$ be
the remaining arc of $\partial f$, oriented from vertex $2$ to vertex $3$.
The sealed search has a cut-open representation of the configuration with the
following properties.
\begin{enumerate}
\item Listed vertices map injectively to ambient vertices; every displayed
real edge, listed triangular face, and shared-edge dual adjacency maps to the
corresponding genuine object.
\item The unlisted portions of $Q$ are represented by ordered
\defterm{virtual subarcs}.  Their interiors are pairwise disjoint and contain
no listed vertex.  If a true triangular continuation has a new third vertex
in the interior of a virtual subarc, the search has a
\defterm{virtual-anchor branch}: it inserts that vertex into the subarc and
then performs the ordinary old-boundary-vertex split.  Otherwise the search
uses the ordinary new-vertex or old-vertex branch.  Consequently an ambient
vertex is never introduced twice.
\item Virtual subarcs are never treated as graph edges.  Consequently every
$8$-cycle found by the search consists of eight distinct genuine vertices
and real edges and is an $8$-cycle of the ambient graph.
\item Every triangle whose true contribution through $e$ is nonzero is
listed.  Restoring the still unlisted parts of $Q$ and the faces incident
with them may add root entries, a shorter root route, or additional non-root
$4^+$-competitors.  These changes can only enlarge a true denominator or
make the contribution through $e$ zero.
\end{enumerate}
\end{lemma}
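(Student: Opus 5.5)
We construct the cut-open representation by following the search transitions along a fixed exposure order of the genuine configuration. The invariant is that every partial state is the image of a genuine partial patch under an injective vertex map.

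\textbf{Initial state.} We begin with the three displayed root edges $30,01,12$, where $e=01$, and one virtual subarc standing for $Q$ from $2$ to $3$. The listed vertices $0,1,2,3$ are distinct ambient vertices because $G$ is 2-connected, so $\partial f$ is a cycle. Thus property~1 holds initially, and property~2 holds trivially with a single subarc whose interior contains no listed vertex.

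\textbf{Inductive step.} We expose the triangles that contribute through $e$, together with their triangular witnesses, in nondecreasing order of true dual distance from $f$, as in the proof of Proposition~\ref{prop:lifting-domination}. Each step attaches a triangle across an unresolved open real edge $uv$; let $w$ be its third ambient vertex. There are three cases.
\begin{enumerate}
\item If $w$ is already listed, planarity of the genuine embedding places $w$ on the same boundary component as $uv$. This is the standard face-tracing argument: the unfilled region containing the new face is bounded by that component. We then use the old-vertex branch.
\item If $w$ is unlisted and lies on $Q$, it lies in the interior of exactly one virtual subarc, since subarc interiors are disjoint and cover the unlisted part of $Q$. We use the virtual-anchor branch, which splits that subarc into two ordered subarcs. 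Disjointness and the absence of listed interior vertices are preserved.
\item Otherwise $w$ is new, and we use the new-vertex branch.
\end{enumerate}
Since each ambient vertex is inserted exactly once, by whichever of these branches applies first, injectivity is maintained. Real edges and shared-edge adjacencies are added only when they are genuine, which gives property~1. Whenever the true second side of an edge is the root or a non-root $4^+$-face, we record this with an $R$- or $H$-seal. Property~3 follows at once because virtual subarcs are never made into edges. Hence any $8$-cycle found by the search uses eight distinct listed vertices joined by real edges, and by injectivity it is an ambient $8$-cycle.

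\textbf{Property~4.} First, every contributing triangle $g$ has a shortest dual path from $f$ that starts across $e$ and passes only through triangular faces before reaching $g$. Exposure along this path follows edges that the relevance predicate selects while $r_i\le 3$ (Lemma~\ref{lem:covering} gives the bound $3$). So $g$ gets listed before the state becomes terminal. Second, consider restoring the unlisted parts of $Q$. The listed dual distances are computed in a subgraph of the true dual, so restoration can only shorten root distances or add adjacencies. A shortened root route either moves $g$'s shortest entry away from $e$, making the contribution through $e$ zero, or adds further entries, which enlarges $|S_f(g)|$. Additional faces can only add $4^+$-faces at minimum distance, which enlarges $|\Near(g)|$, or make some other face strictly nearer, which removes $f$ from $\Near(g)$.

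\textbf{Main obstacle.} The hardest step is the compressed real--virtual components. When a real edge and a virtual subarc bound a sector with only two listed endpoints, we must check that retaining this sector, rather than discarding it, keeps the boundary records planar and faithful. The check is that such a sector is exactly the region of the genuine embedding between that edge and the corresponding stretch of $Q$. Any later triangle placed in it has its third vertex either inside the subarc, handled by the virtual-anchor branch, or new. Consequently the case analysis above still applies within the sector.
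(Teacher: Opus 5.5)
Your architecture matches the paper's. You maintain an injective lifting invariant one triangle at a time, and split the third vertex into old, virtual-anchor and new cases. Detected $8$-cycles are genuine because virtual sides are never edges, and restoration only enlarges denominators.

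\textbf{The gap: listing contributors in property~4.} The sealed search does not follow an exposure order of your choosing. It always resolves the lexicographically least eligible edge, and an edge is eligible only if it lies near a triangle of $C(S)$, that is, one with $r_i\le 3$, $h_i\ge r_i$ and $p_i>0$. So the breadth-first order you borrow from Proposition~\ref{prop:lifting-domination}, which concerns the ordinary searches, is not available here. Your claim that ``the relevance predicate selects these edges while $r_i\le3$'' is therefore unsupported. If an intermediate triangle on the path failed the competitor test or had no shortest entry through $01$, the state could become terminal before $g$ is listed.

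The missing idea is that every prefix $T_i$ of a shortest path $f,T_1,\dots,T_k=g$ leaving through $e$ is itself a contributor through $e$. Prefixes of shortest paths are shortest, so $d^*(f,T_i)=i$ and $e\in S_f(T_i)$. Also $f\in\Near(T_i)$: a $4^+$-face strictly closer to $T_i$, followed by the suffix $T_i,\dots,T_k$, would be strictly closer to $g$. The same argument shows every $T_i$ is triangular, which you only assert.

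Once $T_1,\dots,T_i$ are listed, one gets $r_i=i$ and $h_i\ge r_i$. A simple shortest path out of the merged $H$-node leaves it once, so a violation would give a genuine competitor closer to $T_i$ than $f$. Hence $T_i$ stays in $C(S)$, and the edge to $T_{i+1}$ stays eligible until resolved. For the base step, when $C(S)=\varnothing$ the rule first resolves the root-window edges, which lists $T_1$ across $e$. Induction on $i$ then lists $g$. Your case analysis should likewise be phrased as ``whatever edge the rule selects, one child mirrors its true second side'', not along a fixed exposure order.

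\textbf{A lesser weakness: the invariant.} In the cut-open model a search component need not bound a genuine disk region. It may contain virtual sides, be a compressed two-vertex component, or be coarser than the genuine sector. So the face-tracing argument of Lemma~\ref{lem:same-boundary} does not apply verbatim.

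The paper builds two conditions into the lifting invariant, and checks that the anchor split preserves them:
\begin{itemize}
\item each genuine unfilled sector lies inside the sector of one search component;
\item every listed vertex and every unlisted stretch of $Q$ on the genuine sector boundary appears on that same component.
\end{itemize}
This clause justifies your case~1. It also supplies what your case~2 omits: the unique subarc containing $w$ lies on the boundary component of the selected edge, which is the only place the virtual-anchor branch inserts. Your closing paragraph addresses only the compressed components, and only informally.
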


\begin{proof}
Delete the interior of $f$ from the sphere.  The remaining surface is a
closed disk $D$ with boundary $\partial f$.  The three displayed root edges
are fixed on $\partial D$.  Initially the interior of $Q$ contains no
listed vertex and is represented by the single virtual subarc $23$.

We maintain the following lifting invariant.  Each listed vertex has one
ambient image, the vertex map is injective, and all listed real incidences are
genuine.  Each virtual side is assigned a closed subarc of $Q$; the
interiors of these assigned subarcs are pairwise disjoint, their union is the
unlisted part of $Q$, and their endpoints occur in the order recorded by the
boundary lists.  Every genuine unfilled sector is contained in an unfilled
sector represented by one search boundary component.  Every occurrence of a
listed vertex on the genuine sector boundary occurs on that search boundary,
and every unlisted portion of $Q$ on the genuine boundary is contained in
the subarc assigned to a virtual side of that same search boundary.  Search
sectors may be coarser than genuine sectors, which only admits extra
extensions.

Induct on the exposed triangular faces.  Suppose that a true triangle lies
across a displayed real open edge $uv$, and let $w$ be its third vertex.
If $w$ is already listed, the triangle lies in the sector incident with
$uv$, so the corresponding boundary component contains the displayed
occurrence of $w$; the old-vertex branch gives the true split.  If $w$ is
not listed and does not lie in the interior of an assigned virtual subarc, the
ordinary new-vertex branch introduces it.  Finally suppose that $w$ lies in
the interior of a virtual subarc.  That subarc is unique by the invariant.
The virtual-anchor branch subdivides it at $w$, then regards $w$ as an old
boundary vertex when the triangle is attached.  The two resulting boundary
components both contain the required endpoint occurrence of $w$, while the
two new virtual sides represent the two subarcs of $Q$ cut at $w$.
Thus the assigned virtual subarcs remain disjoint and ordered, and the ambient
vertex $w$ receives exactly one displayed label.  This proves the lifting
invariant for every triangular transition.  An $H$- or $R$-transition only
records the genuine second facial side of a real edge and does not change the
vertex or subarc data.

Because the vertex map is injective and virtual sides are excluded from the
cycle test, every rejected $8$-cycle is a genuine simple $8$-cycle.  We
first justify that every positive contributor through $e$ is listed.  Let
\[
  f,T_1,\ldots,T_k=g
\]
be a shortest dual path to such a contributor, with its first step through
$e$.  Every prefix is shortest, so $d^*(f,T_i)=i$ and
$e\in S_f(T_i)$.  Moreover, $f\in\Near(T_i)$: if a $4^+$-face $h$
were closer to $T_i$ than $f$, then a shortest $h$-to-$T_i$ path
followed by the suffix $T_i,\ldots,T_k$ would make $h$ closer to $g$
than $f$.  In particular, every $T_i$ is triangular and is itself an
eligible candidate through $e$.  The search first resolves the displayed
root-window edges.  Once $T_i$ is listed, the edge leading to
$T_{i+1}$ is an unresolved relevant edge incident with that candidate and
is eventually resolved by the deterministic rule.  Induction on $i$
therefore lists the whole path, including $g$.

It remains to compare loads.  The witness exposure lists or seals every side
that can provide a closer or tied $4^+$-face.  Restoring an unlisted root
edge on $Q$ supplies an additional route from the same root face.  If that
route is shorter, $e\notin S_f(g)$; if it is tied, it adds an entry to
$S_f(g)$.  A restored non-root $4^+$-face can only add a nearest competitor
or show that $f$ is not nearest.  Restored triangular faces cannot create an
unlisted positive contribution through $e$: any triangle with
$e\in S_f(g)$ is already listed along a shortest path beginning with $e$.
Thus every restored object either leaves the summand unchanged, enlarges one
of its denominators, or makes it zero.  This proves all four assertions.
\end{proof}
\begin{proposition}[Coverage of genuine patches]\label{prop:search-state-domination}
The ordinary searches contain a rooted plane-isomorphic or labelled copy of
every genuine rooted triangular subpatch of a simple $2$-connected
$C_8$-free plane graph with $n\ge8$ that satisfies their root and radius
conditions.  For the searches using the shared-path rejection, assume also
$\delta(G)\ge3$.  The sealed search contains, instead, the faithful cut-open
representation supplied by Lemma~\ref{lem:faithful-cut-open}; no plane
isomorphism between a sealed state and the uncompressed ambient patch is
claimed.

More precisely, if a genuine final patch can be exposed by successively adding
triangular faces, then at least one breadth-first exposure is followed by the
corresponding ordinary search, and the corresponding cut-open exposure is
followed by the sealed search.  Every rejection rule is either a necessary
condition under the stated hypotheses or an exposure convention satisfied by
one such representative.  The deterministic state keys do not identify two
states with different available completions.
\end{proposition}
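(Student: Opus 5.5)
The plan is an induction over the exposure sequence: the search is simulated step by step along a chosen exposure of the genuine patch, and we check at each step that the transition taken is legal and that no rejection rule fires. First fix a genuine rooted triangular subpatch $P$ of an ambient graph $G$ satisfying the hypotheses. Order its triangular faces breadth-first from the root, by nondecreasing true dual distance, with ties broken by the lexicographic order on parent edges that the deterministic relevance predicate uses. Each face beyond the root is then attached across an edge incident with exactly one previously exposed face. That edge is an unresolved open real edge in the current search state.

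The invariant to maintain has four parts: there is an injective label map from listed vertices to $V(G)$; listed triangles map to genuine facial triangles; each boundary component of the state lists, in cyclic order, a superset of the occurrences of listed vertices on the corresponding genuine unfilled sector; and the exposure layers equal the true dual distances. For the inductive step, let $uv$ be the next edge and $w$ the third vertex of the genuine triangle across it.
\begin{itemize}
\item If $w$ is unlisted, the new-vertex branch applies.
\item If $w$ is listed, planarity forces $w$ into the sector containing $uv$, so it lies on the same boundary component, and the old-vertex branch reproduces the split into two components.
\end{itemize}
Since these are exactly the two planar possibilities described in Proposition~\ref{prop:lifting-domination}, some successor matches the genuine step.

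Next we check that no rejection rule fires on the matching successor.
\begin{itemize}
\item A repeated facial triangle, or a third facial incidence on one edge, contradicts the simplicity and 2-connectedness of $G$.
\item Non-plane boundary order contradicts the fixed embedding.
\item A detected $8$-cycle consists of distinct genuine vertices and real edges by injectivity, so it would be an $8$-cycle of $G$.
\item The shared two-edge path rejection forces a degree-$2$ vertex, which is excluded when $\delta(G)\ge3$ is assumed.
\end{itemize}
Any rule that is merely an exposure convention is satisfied by the breadth-first representative chosen above. For the sealed search, we do not repeat this argument; we invoke Lemma~\ref{lem:faithful-cut-open} directly, since it already supplies the cut-open representation together with its lifting invariant.

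The remaining claim is about state keys. Two states are merged only when their keys record the same boundary lists in cyclic order, the same layers, the same seals and virtual subarcs, and the same counting data. By definition the set of legal transitions and the counted quantities depend only on these data, so merged states have identical sets of available completions. A key-merged state therefore follows the genuine exposure exactly when the original would have.

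I expect the main obstacle to be the interaction between the breadth-first layers and the deterministic edge-selection rule. The search does not attach triangles in an arbitrary order; it always picks the lexicographically least eligible edge. So I must show the genuine exposure can be reordered to agree with that choice without changing any layer label. I would first try a commutation argument: attachments across distinct open edges in the same layer commute, so their order can be changed freely. I would then check that every edge the genuine patch needs is eventually eligible, by the same prefix-induction used in the proof of Lemma~\ref{lem:faithful-cut-open}.
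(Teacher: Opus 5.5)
Your outline matches the paper's: simulate the new-vertex and old-vertex branches along a breadth-first exposure, check that each rejection rule is a necessary condition, defer the sealed search to Lemma~\ref{lem:faithful-cut-open}, and argue that keys preserve transitions. There is a gap, though. The two facts that make the ordinary induction work are asserted rather than proved, and the obstacle you single out is not where the difficulty lies.

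\emph{First gap: the old-vertex step.} It needs more than ``$w$ lies on the same boundary component''. Ordinary boundary components are stored as cyclic lists of \emph{distinct} vertices, and the attachment reproduces the genuine split only if $w$ occurs exactly once on a simple boundary cycle. The paper gets this from an invariant you never state: the ordinary patch stays $2$-connected. It starts as the root cycle, a new third vertex receives two distinct old neighbours, and an old third vertex only adds edges. So each unfilled region is a disk bounded by a simple cycle, and an old-vertex attachment splits it into two such disks. Your ``superset of the occurrences'' invariant does not give this.

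\emph{Second gap: the layer invariant.} The claim that layers equal true dual distances must be checked against the actual rule. The small-root and refined $4$-face searches assign the layer from a minimum-layer listed neighbour, and they reject non-minimum-layer attachments and layers above a cap. The paper's argument is short. Expose faces by nondecreasing true distance, in any order within a distance class. When $T$ at distance $k$ is exposed, its predecessor on a shortest dual path is already listed with layer $k-1$. No listed neighbour has a smaller layer, since adjacent faces have dual distances differing by at most one. Hence the rule assigns $k$, the non-minimum rejection never fires, and the cap acts only beyond the radius the relevant lemma uses. You also omit the sparsity rejection $E>3V-6$ of the closed search, which is necessary for planarity.

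\emph{The misplaced obstacle.} The lexicographic relevance predicate belongs only to the sealed search. The ordinary searches need only that \emph{some} breadth-first exposure is among the histories they retain. The argument above supplies this with any order inside each layer, so no commutation argument is needed there.

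For the sealed search, your proposed fix would actually fail. The edge choice is deterministic, so there is no freedom to reorder. The least eligible edge need not lie in the shallowest layer, and later adjacencies or $R$-seals can shorten root distances. So the genuine configuration cannot in general carry correct layer labels. The paper does not attempt this: construction layers may exceed the sealed root distance, and candidate and load decisions use a fresh breadth-first computation. Edge-order completeness is Lemma~\ref{lem:unresolved-edge-exposure}, and the layer cutoff is discharged by the audit in Lemma~\ref{lem:finite-guarded-sealed-certificate}.

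\emph{Deduplication.} Your ``by definition'' should be replaced by the actual check. Equal keys yield a root-preserving bijection that transports each legal transition to one with equal child keys. In the sealed search the selected edge is itself determined by the key: it is the least unoriented edge of an eligible set fixed by the open endpoint pairs, the face--layer multiset and the seals.
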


\begin{proof}
For the ordinary searches, argue by induction on the number of listed
triangular faces.  The initial patch is the root boundary cycle and is
$2$-connected.  Adding a new third vertex across an edge gives that vertex
two distinct neighbours in the old patch, so the enlarged patch remains
$2$-connected; adding a triangle whose third vertex is old only adds edges
and also preserves $2$-connectedness.  Hence every unfilled complementary
region of an ordinary patch has a simple boundary cycle.  If a true triangle
lies across a real open boundary edge $uv$, its third vertex lies in the
closure of the incident complementary region.  It is therefore either new to
the patch or occurs exactly once on that region's boundary cycle.  The
new-vertex and old-vertex branches are precisely these two cases, and an
old-vertex branch splits one disk region along the two new edges into two disk
regions.  This also proves the ordinary boundary invariant used in
Lemma~\ref{lem:same-boundary}.  The sealed induction, including virtual
anchors and compressed real--virtual components, is
Lemma~\ref{lem:faithful-cut-open}.

The local rejection rules excluding repeated triangular faces, three listed
incidences on one edge, non-plane boundary order, and an $8$-cycle are
necessary for the simple plane $C_8$-free graphs with $n\ge8$ considered
here.  The searches other than the closed radius-three search also reject a
new triangle that would share a two-edge facial path with an already listed
face, including the root.  By Lemma~\ref{lem:shared-face-path}, such a shared
path would force its internal vertex to have degree $2$; hence this
rejection is valid under $\delta(G)\ge3$.  The closed radius-three search
does not use this rejection and needs no minimum-degree hypothesis.
The optional sparsity rejection in the closed
triangular-neighbourhood search is also necessary for a simple planar graph.
For an ordinary genuine patch, expose faces in nondecreasing true dual
distance from the root.  The resulting representative has correct layer
labels, so an ordinary layer bound is imposed only beyond the radius needed by
the corresponding lemma; any additional retained exposure histories are
overcounts.  In the sealed search, the stored construction layer is audited
separately: the finite certificate in
Lemma~\ref{lem:finite-guarded-sealed-certificate} verifies that no legal
relevant triangular branch is rejected by the construction-layer bound.
Finally, in the small-root and refined $4$-face searches the
minimum-parent-layer rule does not delete a rooted patch; it chooses a
breadth-first exposure order.  To see this formally, order the triangular
faces of a genuine rooted patch first by their true dual distance from the
root and then arbitrarily inside each layer.  When the next face $T$ in this
order is exposed, $T$ has at least one already exposed neighbour in the
previous layer along a shortest dual path from the root to $T$, and no
already exposed neighbour has smaller layer than that.  Therefore exposing
$T$ through a minimum-layer parent gives $T$ its true dual layer and
produces the same final patch.  Inducting over this order gives an exposure
sequence retained by the search.

It remains only to discuss deduplication.  The labelled-key programs do not
identify different labelled patches at all; they only put the stored lists in
a deterministic order.  For the canonical-key programs, the keys record the
root, the listed facial cycles with their layers, the boundary components,
and, when present, the seal data.  Thus two states with the same key have the
same real open boundary edges, the same admissible old and new third-vertex
extensions, the same future rejection tests, and the same counting data.
Keeping one representative therefore cannot remove a possible completion.
The implementation records the exact state invariants and keys for each program.
For completeness, key equality has the following successor-preservation
property.  Equality of rooted boundary keys supplies a root-preserving
bijection preserving the triangle list and layers, cyclic boundary order,
real/virtual side type, next unused label, and seals.  Any legal transition of
one representative is therefore transported to a legal transition of the same
type of the other, and the children again have equal normalized keys; the
converse follows from the inverse bijection.  The sealed program uses the same
principle with an order-independent selected-edge rule: among all eligible
unresolved open edges it chooses the lexicographically least unoriented edge.
Since the sealed key preserves the set of real open endpoint pairs, the
face--layer multiset, and all seal data, equal-key representatives have the
same eligible-edge set and hence the same selected edge.  Terminal status,
successor-key sets, and counting data are therefore preserved under sealed-key
deduplication.  For labelled keys the bijection is the identity.  The only
isomorphism key is used by the closed-neighbourhood search, whose transitions
and terminal tests depend only on the triangular incidences, layers, open
edges, $8$-cycle test, and planar sparsity retained by that key.  A hash
value or regression fingerprint is never used as a mathematical state
invariant.
\end{proof}

\begin{figure}[t]
\centering
\begin{tikzpicture}[scale=1.08, every node/.style={font=\small}]
  \begin{scope}
    \coordinate (a) at (0,0);
    \coordinate (b) at (2.2,0);
    \coordinate (c) at (2.2,1.1);
    \coordinate (d) at (0,1.1);
    \coordinate (e) at (1.1,2.12);
    \coordinate (u) at (3.12,1.55);

    \draw[very thick] (a)--(b)--(c)--(d)--cycle;
    \node at (1.1,0.52) {root face};

    \draw[very thick] (d)--(c)--(e)--cycle;
    \draw[very thick] (c)--(e)--(u)--cycle;

    \draw[very thick,red!75!black] (d)--(e);
    \draw[very thick,red!75!black] (e)--(u);
    \draw[very thick,red!75!black] (u)--(c);
    \draw[very thick] (c)--(e);

    \node[red!75!black,fill=white,inner sep=1pt] at (0.38,1.55) {open};
    \node[fill=white,inner sep=1pt] at (1.76,1.48) {closed};
    \node at (0.76,1.28) {layer $1$};
    \node at (2.48,1.72) {layer $2$};
    \node[align=center] at (1.48,-0.43) {ordinary rooted patch};
  \end{scope}

  \begin{scope}[xshift=6.15cm]
    \coordinate (p3) at (0,0);
    \coordinate (p0) at (1.25,0);
    \coordinate (p1) at (2.5,0);
    \coordinate (p2) at (3.75,0);
    \coordinate (pa) at (1.88,1.05);

    \draw[very thick] (p3)--(p0)--(p1)--(p2);
    \draw[very thick] (p0)--(pa)--(p1);
    \draw[very thick,blue!75!black,dashed] (p3) .. controls (0.9,-1.02) and (2.85,-1.02) .. (p2);

    \fill (p3) circle (1.5pt) node[below=2pt] {$3$};
    \fill (p0) circle (1.5pt) node[below=2pt] {$0$};
    \fill (p1) circle (1.5pt) node[below=2pt] {$1$};
    \fill (p2) circle (1.5pt) node[below=2pt] {$2$};
    \fill (pa) circle (1.5pt);

    \node[above=2pt,fill=white,inner sep=1pt] at (1.88,0) {displayed $01$};
    \node[above=13pt,fill=white,inner sep=1pt] at (0.62,0) {adjacent $30$};
    \node[above=13pt,fill=white,inner sep=1pt] at (3.12,0) {adjacent $12$};
    \node[blue!75!black,fill=white,inner sep=1pt] at (1.88,-0.78) {virtual side $23$};
    \node[align=center] at (1.88,-1.35) {three-root-edge model};
  \end{scope}
\end{tikzpicture}
\caption{A schematic rooted search state.  Left: red edges are open boundary
edges, the thick black non-root edge is closed, and the displayed triangles
have layers measured from the root face.  Right: in the three-root-edge model
the vertices $3,0,1,2$ lie on the displayed part of the root boundary, while
the blue dashed arc from $3$ to $2$ is a bookkeeping side for the omitted
part of the root $4^+$-face; it is not a graph edge.}
\label{fig:rooted-state}
\end{figure}

The boundary invariant follows by induction.  Initially the displayed
boundary of the sealed three-root-edge model is $3,0,1,2$, with $23$
virtual.  An ordinary new third vertex subdivides a real boundary edge, and an
old third vertex splits the current component.  If a new ambient third vertex
lies on the omitted root arc, the virtual-anchor branch first inserts it into
each candidate virtual subarc in turn and then performs the old-vertex split.
The branch using the subarc that contains the true vertex preserves its
position on the cut-open root boundary; the other branches are conservative
overcounts.  Each resulting component with at least three vertices is a
simple cycle.  If a split isolates one virtual subarc together with a newly
created real edge, the result is a compressed two-vertex component with
exactly one virtual side and one real side.  The search retains this component
and continues across its real side.  Thus the virtual subarcs retain the order
of all root-arc vertices encountered by the search, and every genuine
unfilled sector remains represented.

For the counting lemmas, a layer-$k$ triangle is counted only after all
triangles relevant to the nearest-face test are already closed in the listed
state.  The reason for the two closure thresholds is the following
shortest-path observation.  If a competing $4^+$-face is closer than the
root face, then the last triangular face before it on a shortest dual path is
at distance at most $k-2$ from the counted triangle; if a competing
$4^+$-face is tied with the root face, the corresponding last triangular
face is at distance at most $k-1$.  Thus the nearest-face count closes
triangles through distance $k-2$, while the unique-nearest count closes
triangles through distance $k-1$.  Lemma~\ref{lem:radius-five} gives the
formal witness statement used in all nearest and unique-nearest counts.

\subsection{Verification of certificate \CertOne: closed triangular neighbourhoods}

The pseudocode blocks in the remainder of this appendix correspond, in order, to the finite
certificates \CertOne--\CertSix{} in Theorem~\ref{thm:finite-certificates}.  For
navigation, the verification record begins with a theorem-to-code dictionary mapping
each certificate to the source file, command line, output fields, and
certificate files used to certify it.  The sealed certificate checker and the
Boost planarity cross-check audit existing computations rather than define
additional mathematical searches, so they do not have separate pseudocode
blocks.

\begin{algorithm}[t]
\caption{\textsc{ClosedRadiusThreeSearch}}
\label{alg:closed-radius-three}
\begin{algorithmic}[1]
\State Start from the root triangle $(0,1,2)$ in layer $0$.
\While{there is an edge incident with exactly one listed triangle of layer at most $2$}
  \State Add a triangle across that edge.
  \State Try one new third vertex and each old third vertex distinct from the edge endpoints.
  \State Reject states with repeated triangles, an edge with three listed incidences, an $8$-cycle, or $E>3V-6$.
\EndWhile
\State Record every completed layer-three state, whether a layer-three edge remains open, and the number of vertices in the listed patch.
\end{algorithmic}
\end{algorithm}

\begin{lemma}[Closed radius-three triangular neighbourhoods]
\label{lem:closed-radius-three}
Let $G$ be a simple 2-connected $C_8$-free plane graph with
$|V(G)|\ge8$, and let $g$ be a 3-face of $G$.  Define the
\defterm{radius-three dual ball} around $g$ by
\[
  \Ball_3(g)=\{h:\ d^*(g,h)\le 3\}.
\]
If every face in $\Ball_3(g)$ is triangular, then no face lies at dual distance $4$ from $g$.
Consequently every face of $G$ lies in $\Ball_3(g)$, and $G$ has at
most $7$ vertices.
\end{lemma}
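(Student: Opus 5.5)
The plan is to derive the lemma from the exhaustive run of \textsc{ClosedRadiusThreeSearch} (Algorithm~\ref{alg:closed-radius-three}), using the coverage statement of Proposition~\ref{prop:search-state-domination} to transfer its output to the ambient graph. This is the search that does not use the shared-path rejection, so no minimum-degree hypothesis is needed.

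\textbf{Step 1: the genuine neighbourhood is represented.} Let $g$ be a triangular face whose radius-three dual ball $\Ball_3(g)$ consists only of triangles. Expose the faces of $\Ball_3(g)$ in nondecreasing true dual distance from $g$, adding each face through a parent edge in the previous layer. Because $G$ is 2-connected, every complementary region of the growing patch has a simple boundary cycle. Hence each newly exposed triangle has its third vertex either new or on the boundary of the same region, which are exactly the branches the search tries. The rejections used are repeated triangles, three incidences on one edge, an $8$-cycle, and $E>3V-6$. Each of these is a necessary condition in a simple plane $C_8$-free graph. By Proposition~\ref{prop:search-state-domination}, some completed layer-three state of the search is a rooted copy of the genuine patch $P$ formed by $\Ball_3(g)$, with correct layer labels.

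\textbf{Step 2: read off the search output.} The claim to verify from the recorded output is that every completed layer-three state has no open edge on a layer-three triangle and has at most $7$ vertices. Then in $P$ every edge of a layer-$3$ triangle is shared with another listed triangle. Edges of lower-layer triangles are closed by construction, since the loop runs until no triangle of layer at most $2$ has an open edge.

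\textbf{Step 3: conclude for $G$.} Suppose some face $h$ had $d^*(g,h)=4$. Then $h$ would share an edge with a face of $\Ball_3(g)$ at distance $3$. That edge is closed in $P$, so its two incident faces are both already in $\Ball_3(g)$, and $h$ would lie in the ball, a contradiction. Therefore no face lies at dual distance $4$. Since the dual of a connected plane graph is connected, every face lies in $\Ball_3(g)$. Every vertex of $G$ lies on some face, so $V(G)=V(P)$ and $|V(G)|\le 7$.

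\textbf{Main obstacle.} The hardest part is Step~2, which is the finite computation itself. Its correctness depends on two things: that the exploration terminates, and that the cutoffs prune no genuine branch. Termination follows from the $8$-cycle test and $E\le 3V-6$, which bound the number of vertices in any retained state. The second requirement is the coverage argument already given in Proposition~\ref{prop:search-state-domination}. One further point needs care: if the ball closes up with at most $7$ vertices, then $G$ has at most $7$ vertices. This contradicts the hypothesis $|V(G)|\ge 8$, so the hypothesis of the lemma is in fact never satisfied. The statement then holds vacuously in its intended form, and the output check should confirm exactly this.
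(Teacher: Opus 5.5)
Your proposal is correct and follows the paper's proof essentially step for step. The steps are: transfer via Proposition~\ref{prop:search-state-domination} (the closed radius-three search omits the shared-path rejection, so no minimum-degree hypothesis is needed); the certified output of \textsc{ClosedRadiusThreeSearch}, namely no open edge on a layer-three triangle and at most seven vertices; the observation that a face at dual distance $4$ would have to lie across such an open edge; and connectivity of the dual. Two side remarks need correcting. Termination of the search comes from the layer cap (only triangles of layer at most $2$ are expanded, so every state has boundedly many triangles), not from the $8$-cycle and $E\le 3V-6$ tests, and $E\le 3V-6$ bounds no vertex count. The simple boundary cycles come from the patch itself staying $2$-connected as triangles are glued on along edges, not from the $2$-connectivity of $G$.
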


\begin{proof}
The search \textsc{ClosedRadiusThreeSearch} uses only necessary rejection
rules, so by Proposition~\ref{prop:search-state-domination} every genuine
all-triangular radius-three patch occurs among the enumerated states.  Notice
that no minimum-degree hypothesis is used here: the search enumerates a
slightly larger class than the one needed later, allowing boundary and
degree-two phenomena that would be excluded in the final reduced graph.

The finite verification for the $C_8$, radius-three case certifies
\[
\begin{array}{c|c}
\text{certified quantity} & \text{value}\\
\hline
\text{enumerated states} & 291\\
\text{completed radius-three states} & 85\\
\text{completed states with an open layer-three edge} & 0\\
\text{maximum number of vertices in the completed list} & 7 .
\end{array}
\]
Thus every completed state reaching layer three has no open edge incident
with a layer-three triangle.
Thus a genuine all-triangular radius-three neighbourhood cannot have an
adjacent face at distance four.  Indeed, such a face would have to lie across
an open edge of a listed layer-three triangle; if that edge were already
closed, the face on the other side would already be part of the displayed
radius-three patch.
Since the dual graph of a connected plane graph is connected, the absence of a
face at dual distance $4$ means that every face of $G$ already lies in
$\Ball_3(g)$.
The same complete $C_8$ radius-three list has at most seven vertices.
Hence \textsc{ClosedRadiusThreeSearch} verifies certificate \CertOne.
\end{proof}

This is the finite verification underlying certificate \CertOne.  The main text
derives Lemma~\ref{lem:covering} from \CertOne{} and
Proposition~\ref{prop:lifting-domination}.

\subsection{Verification of certificate \CertTwo: small root-face bounds}

\begin{lemma}\label{lem:same-boundary}
Let $P$ be a connected plane patch whose boundary components record the
unfilled complementary regions.  Suppose a triangular face $uvw$ is attached
across a boundary edge $uv$ of $P$.  If $w$ is already a vertex of
$P$, then $w$ lies on the same boundary component as the edge $uv$.
\end{lemma}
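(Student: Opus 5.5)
The argument is topological: it rests on the fact that the unfilled regions of $P$ are disjoint open sets. Work in the sphere with the embedding of $P$ fixed. Each unfilled complementary region $R$ of $P$ is a connected open set whose frontier is exactly the recorded boundary component, and distinct unfilled regions are disjoint. The edge $uv$ is open, so exactly one listed face is incident with it. Hence, near the interior of $uv$, one side lies in a listed face and the other side lies in a unique unfilled region $R$; call the component of $uv$ the boundary $\partial R$.

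The new triangle $uvw$ is a face attached across $uv$, so its interior lies on the non-listed side of $uv$. It is also disjoint from every edge and vertex of $P$, because the enlarged patch is still plane and the triangle is a face. The interior of $uvw$ is therefore a connected open set missing $P$ and meeting $R$ near $uv$, so it is contained in $R$.

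Next we locate $w$. The triangle's boundary edges $uw$ and $vw$ lie in the closure $\overline{R}$. Suppose $w$ is already a vertex of $P$. Since $w\in\overline{R}$ and $w$ lies on $P$, it cannot be in the open region $R$, so $w$ lies on the frontier $\partial R$. That frontier is exactly the recorded boundary component containing $uv$. In the ordinary setting this component is a simple cycle, using the 2-connectedness invariant from Proposition~\ref{prop:search-state-domination}, so $w$ occurs exactly once on it.

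The main obstacle is the claim that the frontier of $R$ consists precisely of the recorded component, with no extra vertices of $P$ on $\partial R$ that are not listed on its record. I would settle this with the invariant that the boundary records track the facial walks of the unfilled regions, which is maintained by the new-vertex and old-vertex splits. In the sealed model, virtual sides are treated as part of the frontier through the assigned subarcs of Lemma~\ref{lem:faithful-cut-open}. One further technicality is ruling out the possibility that the edges $uw$ and $vw$ leave $\overline{R}$. They cannot, since each is an edge of a face lying inside $R$, and near any interior point of such an edge that face's side is contained in $R$.
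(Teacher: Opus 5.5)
Your proposal is correct and takes the same route as the paper. The interior of the new face is a connected open set missing $P$ that meets the unfilled region $R$ across $uv$, so it lies in $R$; hence an old vertex $w$ lies in $\overline{R}\setminus R$, which is the recorded boundary of $R$. This last identification is granted by the lemma's hypothesis, so your ``main obstacle'' is not really an obstacle here.

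You simply make explicit the topology behind the paper's one-line remark that the triangle would otherwise have to cross $P$ or leave the region. Your remarks about the simple-cycle invariant and virtual sides are not needed for this statement.
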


\begin{proof}
The new face lies in the unfilled complementary region incident with the
boundary edge $uv$.  If the third vertex $w$ were an old vertex on a
different boundary component, then the embedded triangle $uvw$ would have to
cross the existing patch or leave this complementary region.  Both are
impossible in a plane extension of $P$.  Thus $w$ is on the same boundary
component.
\end{proof}

\begin{lemma}\label{lem:shared-face-path}
In a simple 2-connected plane graph with minimum degree at least $3$, two
distinct faces cannot share a boundary path of length at least two.
\end{lemma}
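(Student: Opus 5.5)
The plan is to argue by contradiction from the facial walks. Suppose two distinct faces $f_1,f_2$ share a boundary path $P=x\,v\,y$ of length two, meaning the consecutive edges $xv$ and $vy$ occur on both facial boundaries. It suffices to treat length two, since any longer common path contains such a subpath. I would show that $v$ has degree exactly $2$, which contradicts $\delta\ge3$.

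First, because the graph is $2$-connected, each facial boundary is a cycle. Hence $v$ occurs exactly once on $\partial f_1$ and once on $\partial f_2$. Around $v$, the edges incident with $v$ appear in a cyclic rotation, and each face incident with $v$ occupies one angular sector between two rotation-consecutive edges. The edge $xv$ is incident with exactly two faces, namely $f_1$ and $f_2$, which are distinct. The same holds for $vy$.

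The key step is the following. The face $f_1$ traverses $xv$ and then $vy$ at its unique occurrence of $v$. Therefore $xv$ and $vy$ bound a single angular sector at $v$, the one occupied by $f_1$, and so they are consecutive in the rotation at $v$. Applying the same reasoning to $f_2$, the sector of $f_2$ at $v$ is also bounded by $xv$ and $vy$. Since $f_1\ne f_2$, these are two different sectors. An edge $xv$ has exactly two sectors on its sides, and here both of them are bounded on their other side by $vy$. Hence the rotation at $v$ consists of $xv$ and $vy$ alone, giving $\deg(v)=2$, a contradiction.

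The main obstacle is making the "unique occurrence/sector" claim rigorous, in particular ruling out that $f_1$ and $f_2$ use different occurrences of $v$. I would handle this by invoking the face-boundary-is-a-cycle property of $2$-connected plane graphs. I would also note the degenerate possibility $x=y$, which is excluded by simplicity, since a cycle boundary cannot repeat an edge.
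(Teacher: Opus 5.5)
Your proposal is correct and takes essentially the same approach as the paper: both arguments look at the rotation of edges around the internal vertex $v$ and observe that the two face angles bounded by $xv$ are both closed off by $vy$, which forces $\deg(v)=2$. Your appeal to 2-connectivity, that facial boundaries are cycles so each face has exactly one corner at $v$, makes explicit a step the paper's proof uses without stating it.
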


\begin{proof}
If two faces shared $v_0v_1\cdots v_k$ with $k\ge 2$, then the two sides
between the consecutive edges $v_0v_1$ and $v_1v_2$ would already be
occupied by the two face interiors.  In the cyclic order of edges around the
internal vertex $v_1$, the edges $v_1v_0$ and $v_1v_2$ would therefore
already bound both incident face angles, leaving no room for any third edge at
$v_1$.  Hence $d(v_1)=2$, contradicting $\delta(G)\ge 3$.
\end{proof}

\begin{algorithm}[t]
\caption{\textsc{SmallRootFaceSearch}($d$)}
\label{alg:small-root}
\begin{algorithmic}[1]
\State Start from the root $d$-face $(0,1,\ldots,d-1)$ in layer $0$.
\State Maintain the boundary components of the listed patch.
\While{a real boundary edge $uv$ may receive a triangle}
  \State Try one new third vertex and each old third vertex on the same boundary component.
  \State Assign the new layer from a minimum-layer shared neighbour.
  \State Reject states with repeated triangles, an edge with three listed incidences, a new triangle sharing two edges with a listed face, a layer above $5$, a non-minimum-layer attachment, or an $8$-cycle.
  \State Count a layer-$k$ triangle, $k\le 3$, only when every listed triangle within dual distance $k-2$ from it is closed.
\EndWhile
\State Ignore completed states with fewer than eight vertices.
\State Record, for each level $t$, the number of surviving states and the maximum count.
\end{algorithmic}
\end{algorithm}

\begin{proof}[Verification of certificate \CertTwo]
By Proposition~\ref{prop:search-state-domination},
the search in Algorithm~\ref{alg:small-root} includes every genuine rooted radius-five
patch around a $d$-face.  It may also keep extra states, which can only
increase the certified maximum.  Lemma~\ref{lem:radius-five}
with $k\le3$ shows that this radius and the $k-2$ closure condition are
sufficient for the nearest-face test: any closer competing $4^+$-face would
leave an open triangular witness at distance at most $k-2$ from the counted
triangle, and all such witnesses lie within dual distance at most $5$ from
the root.

The implementation assigns count $0$ to a completely closed state with
fewer than eight vertices.  Such a state cannot be the rooted patch of a graph
satisfying $|V(G)|\ge 8$: once no boundary edge remains, the listed patch
together with the root face is already the whole connected plane graph.

The complete level-by-level tables are part of the verification record.
For the proof, only the certified maxima and terminal exhaustion levels are
needed:
\[
\begin{array}{c|cccc}
d&4&5&6&7\\
\hline
\max b(f)\text{ found by the search}&7&7&5&4\\
\text{first terminal level }t\text{ with no surviving states}&11&13&12&11.
\end{array}
\]
Since every genuine $f$-bad triangle is counted in some enumerated state,
these exhaustive-search maxima certify \CertTwo.
\end{proof}

\subsection{Verification of certificate \CertThree: unique-nearest four-face count}

For certificate \CertThree, let
\[
  u(f)=|\{g:\ \Near(g)=\{f\}\}|
\]
be the number of triangular faces whose unique-nearest $4^+$-face is $f$.

\begin{lemma}[Unique-nearest $4$-face bound]\label{lem:unique-four}
Let $G$ be a simple 2-connected $C_8$-free plane graph with
$\delta(G)\ge 3$ and $|V(G)|\ge 8$, and let $f$ be a 4-face.  Then
$u(f)\le 6$.
\end{lemma}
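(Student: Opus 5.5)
The bound will come from an exhaustive finite search that mirrors the verification of \CertTwo{}, with a stricter counting rule adapted to unique nearness. Start from a root $4$-face $(0,1,2,3)$ in layer $0$ and run the same search as \textsc{SmallRootFaceSearch}$(4)$. That is, attach triangles across open real boundary edges, using either a new third vertex or an old vertex on the same boundary component; the latter is justified by Lemma~\ref{lem:same-boundary}. Assign each new triangle its layer from a minimum-layer neighbour. Reject repeated triangles, edges with three incidences, $8$-cycles, and new triangles sharing a two-edge path with a listed face; the last rejection is valid under $\delta(G)\ge3$ by Lemma~\ref{lem:shared-face-path}.

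By Proposition~\ref{prop:search-state-domination}, every genuine rooted patch around $f$ is represented by at least one breadth-first exposure. Extra retained states only enlarge the maximum, so whatever maximum the search records is an upper bound for $u(f)$.

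The key change from \CertTwo{} is the closure condition. A triangle $g$ with $\Near(g)=\{f\}$ must have no $4^+$-face other than $f$ at dual distance $\le d^*(f,g)$; ties are excluded, not merely strictly closer faces.

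1. By Lemma~\ref{lem:covering}, every such $g$ has $d^*(f,g)=k\le3$.
2. If a competing $4^+$-face $h\neq f$ had $d^*(h,g)\le k$, take the last triangle before $h$ on a shortest $g$--$h$ dual path. That triangle lies within distance $k-1$ of $g$.
3. So a layer-$k$ triangle is counted only when every listed triangle within dual distance $k-1$ of it is closed by triangles. This is the unique-nearest threshold described before Lemma~\ref{lem:radius-five}, which I will invoke as the formal witness statement.
4. All these witnesses lie within dual distance $3+2=5$ of the root, so the layer cap $5$ loses nothing.
5. Closed states with fewer than eight vertices are discarded as before, since $|V(G)|\ge8$ and a fully closed patch is the whole graph.

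The proof then reduces to the certified output: the search terminates at some finite level with no surviving states, and the maximum unique-nearest count over all states is $6$. Every genuine unique-nearest contributor to $f$ is counted in the state representing its true neighbourhood, so $u(f)\le6$.

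I expect the main obstacle to be justifying the $k-1$ closure rule rigorously, in particular for tied competitors reached through a root edge. Here I would use that root edges are not open witness edges, so $f$ itself never counts as a competitor. A second concern is whether the exposure ordering can double-count a triangle; it cannot, since triangles are distinct listed faces with fixed true layers in the breadth-first representative.
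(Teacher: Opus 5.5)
\textbf{Gap: the layer cap in step 4.} Your overall route is the paper's: re-run the rooted $d=4$ search with the stricter $k-1$ closure rule justified by Lemma~\ref{lem:radius-five}, discard closed states with fewer than eight vertices, and read off the certified maximum $6$. But step~4 is wrong, and it is exactly where the paper's search departs from \textsc{SmallRootFaceSearch}$(4)$.

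For a counted triangle $g$ at layer $k=3$, the witnesses within dual distance $k-1=2$ of $g$ can lie at layer $5$. Your own closure rule then requires every edge of such a witness to have its second incident face listed, and that face may be a triangle at true dual distance $6$ from the root. With the cap at $5$, the search rejects that attachment. So in every state representing the true neighbourhood, the layer-$5$ witness stays open, and $g$ never passes the closure test. A genuine unique-nearest contributor is therefore never counted. The maximum of a cap-$5$ search need not dominate $u(f)$, and your sentence ``every genuine unique-nearest contributor is counted in the state representing its true neighbourhood'' fails. You correctly observed that the witnesses lie within distance $5$ of $f$, but you forgot that checking they are closed requires listing their neighbours one layer further out.

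\textbf{The fix.} The paper does exactly this: Algorithm~\ref{alg:unique-four} extends the allowed layers from $5$ to $6$. The witnesses lie within dual distance $5$ of $f$, and the triangles needed to close their open edges have layer at most $6$. The certified unique-nearest maximum $6$, with first terminal level $11$, is for that radius-six search. The same accounting explains why \CertTwo{} can stop at layer $5$: its $k-2$ rule places the witnesses at layer at most $4$, so closing them needs only layer $5$.
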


\begin{algorithm}[t]
\caption{\textsc{UniqueNearestFourFaceSearch}}
\label{alg:unique-four}
\begin{algorithmic}[1]
\State Run \textsc{SmallRootFaceSearch}$(4)$ with allowed layers extended from $5$ to $6$.
\State Count a layer-$k$ triangle, $k\le 3$, only when every listed triangle within dual distance $k-1$ from it is closed.
\State Ignore completed states with fewer than eight vertices.
\State Record, for each level $t$, the number of surviving states, the best nearest count, and the best unique-nearest count.
\end{algorithmic}
\end{algorithm}

\begin{proof}
The search in Algorithm~\ref{alg:unique-four} is the radius-six version of
Algorithm~\ref{alg:small-root} with root degree $4$.  The extra layer is
needed only for the
unique-nearest test.  By Lemma~\ref{lem:radius-five}, any competing
$4^+$-face at distance at most $k=d^*(f,g)\le3$ from a counted triangle
$g$ would leave an open triangular witness at distance at most $k-1$ from
$g$.  Such witnesses lie within dual distance at most $5$ from $f$, and
the triangular faces needed to close their open edges have layer at most
$6$.  Hence every genuine triangle with $\Near(g)=\{f\}$ satisfies the
closure condition used by the search and is therefore counted in some
enumerated state.  As in
Lemma~\ref{lem:small-degree}, completed states with fewer than eight vertices
are ignored because they cannot occur inside a connected graph with at least
eight vertices.

The finite verification gives the following certificate:
\[
\begin{array}{c|c}
\text{certified quantity} & \text{value}\\
\hline
\text{counted triangles with } f\in\Near(g) & 7\\
\text{counted triangles with } \Near(g)=\{f\} & 6\\
\text{first terminal level} & 11 .
\end{array}
\]
The full level-by-level table is part of the verification record.
Hence the maximum unique-nearest count is $6$, verifying certificate \CertThree.
\end{proof}

\subsection{Verification of certificate \CertFour: the dangerous four-face case}

\begin{lemma}[Dangerous $4$-face certificate]\label{lem:dangerous-four}
Let $G$ be a simple 2-connected $C_8$-free plane graph with
$\delta(G)\ge3$ and $|V(G)|\ge8$, and let $f$ be a $4$-face.  Suppose
that exactly six triangular faces $g$ satisfy $\Near(g)=\{f\}$, and suppose
that there is a further triangular face $h$ with $f\in \Near(h)$.  Then
\[
  |\Near(h)|\ge 3.
\]
\end{lemma}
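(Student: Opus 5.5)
The plan is to prove Lemma~\ref{lem:dangerous-four} as a further refinement of the search in Algorithm~\ref{alg:unique-four}, using the same rooted enumeration but with a joint counting condition. Root the search at the $4$-face $f=(0,1,2,3)$ and run \textsc{SmallRootFaceSearch}$(4)$ with layers allowed up to $6$. As in the proof of Lemma~\ref{lem:unique-four}, a triangle is counted as a unique-nearest contributor only after every listed triangle within dual distance $k-1$ of it is closed. By Lemma~\ref{lem:small-degree} and Lemma~\ref{lem:covering}, the further contributor $h$ also lies at dual distance at most $3$ from $f$. Its status, meaning the number of nearest $4^+$-faces, is therefore decided by witnesses within dual distance $k-1\le 2$ of $h$, hence within layer at most $5$ from $f$. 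Closing those witnesses requires layer at most $6$. By Proposition~\ref{prop:search-state-domination} and Lemma~\ref{lem:radius-five}, every genuine configuration consisting of the six unique-nearest contributors, the triangle $h$, and the witness faces deciding $\Near(h)$ is represented by some enumerated state. Completed states with fewer than eight vertices are discarded exactly as before.

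In each terminal state where six counted triangles have unique nearest face $f$ and some seventh counted triangle $h$ has $f\in\Near(h)$, the program will compute the set of distinct non-root $4^+$-faces at distance $d^*(f,h)$ from $h$. In the search these faces appear as distinct resolved $H$-seals or open non-root sides reached by shortest witness paths. The claim to certify is that this set always has at least two members. To avoid overcounting, two open sides are identified as the same competing face whenever they could belong to one ambient face; this is the conservative choice, since merging can only decrease the certified count. If the minimum over all such states is at least $2$, then $|\Near(h)|\ge 3$ follows, because $f$ itself is also in $\Near(h)$.

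The main obstacle is this identification step. A single non-root $4^+$-face of the ambient graph may touch the listed patch along several open edges, and two witness edges at equal distance from $h$ could lie on the same face. I would first try the crude rule that all unresolved open sides in one boundary component are treated as a single face. Separate boundary components, or sides separated by a path through $f$, would then be counted as distinct faces, since by planarity a face cannot meet two different complementary regions of the patch. If this rule is too crude and produces a state with only one competitor, I would refine it by sealing, that is, by branching on whether the relevant open edges bound the same face. That face would then contain the sealed boundary path, so Lemma~\ref{lem:shared-face-path} and the $C_8$ test applied to the resulting face cycle would prune the merged branches. Once the checker reports minimum competitor count $2$ over all dangerous terminal states, the lemma follows and certifies \CertFour.
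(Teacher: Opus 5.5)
\textbf{Verdict: there is a genuine gap.} Your set-up matches the paper's. You reuse the radius-six rooted $d=4$ state space, look at the configurations with six unique-nearest contributors and one further contributor $h$, and aim to exhibit two non-root nearest competitors of $h$. However, the step you flag as the main obstacle is left open, and that is where the proof actually lies.

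Your crude boundary-component rule is sound, but it does not separate the competitors that actually occur. In every dangerous configuration $h$ lies in layer $2$, and both competitors sit across the two open edges $aw$ and $wb$ of one witness triangle $t=abw$ sharing the edge $ab$ with $h$. These two sides are consecutive at the apex $w$. So whenever $w$ carries no further listed face, they lie on a single boundary component, and your rule merges them into one competitor.

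Your fallback does not repair this in general. A ``$C_8$ test applied to the resulting face cycle'' is not a defined test, since apart from the two listed edges the boundary of that face is unknown. Lemma~\ref{lem:shared-face-path} prunes a merge only when the two sides are edges of one listed face, so that they form a shared boundary path of length two.

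The missing ingredient is a certificate that fixes exactly this structure, which is what the paper verifies. In all $112$ states with $(b,u)=(7,6)$ there is exactly one open witness triangle $t$ within distance $d^*(f,h)-1=1$ of $h$, $t$ has exactly two open edges, and neither edge admits a legal triangular continuation. The two competitors are then distinct at once by Lemma~\ref{lem:shared-face-path}, and distinct from $f$ because the open edges are not root edges.

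A second point also needs repair. The small-root search has no $H$-seals, and an open edge in one of its states carries no information about its second side. Counting open non-root sides as competitors is justified only at the state formed by the full genuine patch of triangles up to layer $6$. There, an open edge of a witness triangle in layer at most $5$ must face a non-root $4^+$-face. That state need not be terminal, because other open edges may still admit legal but non-genuine continuations. So you would have to test every state with $(b,u)=(7,6)$, not only terminal ones.

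The paper instead runs a one-step test, without the minimum-parent-layer rejection, showing that no triangular face can lie across either open edge of $t$ in any plane $C_8$-free realization. Simplicity and $2$-connectivity then make both second sides $4^+$-faces. Each lies at dual distance at most $2=d^*(f,h)$ from $h$, and since $f\in\Near(h)$, both belong to $\Near(h)$.
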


\begin{algorithm}[t]
\caption{\textsc{RefinedFourFaceLoadSearch}}
\label{alg:refined-four}
\begin{algorithmic}[1]
\State Run the radius-six rooted $d=4$ search used for the unique-nearest
test.
\State Mark a state as dangerous if its nearest count is $7$, its
unique-nearest count is $6$, and it has exactly one non-unique counted
triangle.
\For{each dangerous state}
  \State Find the unique non-unique triangle $h$.
  \State Inspect the open triangular witnesses at dual distance at most
  $d^*(f,h)-1$ from $h$.
  \State Check that there is exactly one such witness triangle $t$, that
  $t$ has exactly two open edges, and that neither open edge admits a legal
  triangular continuation.
\EndFor
\State Output the number of dangerous states and the number satisfying the
certificate above.
\end{algorithmic}
\end{algorithm}

\begin{proof}
By Lemma~\ref{lem:small-degree}, at most seven triangles contribute to
$\ell(f)$.  Hence the six unique-nearest triangles and the further triangle
$h$ in the hypothesis are exactly the seven counted triangles, and $h$ is
the unique non-unique contributor.  The statement is local in the same rooted
$d=4$ state space used in Lemma~\ref{lem:unique-four}.  The verification uses
the same radius-six
state space as the unique-nearest computation, so the dangerous states are
tested after all witnesses needed for the unique-nearest test are visible.
The finite verification
\textsc{RefinedFourFaceLoadSearch}, documented in the verification record,
certifies the following facts:
\[
\begin{array}{c|c}
\text{quantity} & \text{certified value}\\
\hline
\text{dangerous states with }(b,u)=(7,6) & 112\\
\text{dangerous states satisfying the two-open-edge certificate} & 112\\
\text{dangerous states without certificate} & 0 .
\end{array}
\]
Here $b$ denotes the number of counted triangles $g$ with $f\in \Near(g)$,
and $u$ denotes the number of counted triangles with $\Near(g)=\{f\}$.
The verification record also gives a representative certified
dangerous state, including the non-unique triangle, the witness triangle, and
the two open witness edges.

Thus every possible rooted dangerous configuration has the following
structure.  There is a unique counted triangle $h$ for which
$f\in \Near(h)$ but $\Near(h)\ne\{f\}$.  This triangle has layer $2$.  There is
exactly one open triangular witness $t$ at dual distance $1$ from $h$,
the witness $t$ has exactly two open edges, and neither of these open edges
can be completed by attaching another triangular face without violating the
necessary local conditions, in particular $C_8$-freeness.  In this last
one-step test the program does not impose the non-minimum-layer attachment
rejection from the rooted enumeration; it asks whether any triangular face
can lie across the open edge in a plane $C_8$-free realization.

Now consider such a configuration inside the ambient graph $G$.  The two
witness edges are non-root edges of $t$: root edges are excluded from the
witness list, since the face across a root edge is the root face $f$ itself
and cannot supply an additional competing $4^+$-face.  Since an open
witness edge cannot be incident with another triangular face in $G$, the
face on the other side of that edge is not triangular.  The graph is
2-connected, so every face boundary is a cycle, and the graph is simple, so
there are no faces of degree $1$ or $2$.  Hence the face on the other
side of each open witness edge is a $4^+$-face.  Each of these two
$4^+$-faces is reached from $h$ by first moving from $h$ to $t$, and
then crossing one open edge of $t$.  Hence each lies at dual distance at
most $2=d^*(f,h)$ from $h$.  Since $f\in \Near(h)$, no $4^+$-face is
closer to $h$ than $f$, so both of these faces are also nearest
$4^+$-faces to $h$.

The two faces obtained from the two open edges are distinct.  If they were
the same face, then that face and the triangular face $t$ would share a
boundary path of length two, contradicting Lemma~\ref{lem:shared-face-path}.  They
are also distinct from $f$, because $t$ has positive layer and its open
edges are not root edges.  Therefore $h$ has at least the three nearest
$4^+$-faces consisting of $f$ and these two competitors.  Hence
$|\Near(h)|\ge3$.
This is also the lifting step from a certified state to every ambient
completion: the certificate supplies two specific real open edges of the
listed witness $t$, and plane incidence supplies their true second facial
sides.  The one-step continuation test excludes a triangle on either side;
simplicity and 2-connectivity make both sides $4^+$-faces, while
Lemma~\ref{lem:shared-face-path} keeps their identities distinct.  No identity
of an unlisted face is inferred merely from a seal or from the representative
drawing.
This verifies certificate \CertFour.
\end{proof}

\subsection{Verification of certificate \CertFive: the large-face single-edge bound}

We now control the equal-split load on larger faces.  We use the notation
$S_f(g)$ and $L_f(e)$ introduced in Section~\ref{sec:local-inputs}: the
first records the shortest entry edges from the root face $f$ to $g$, and
the second is the portion of $\ell(f)$ carried by one boundary edge $e$.
This subsection describes the finite sealed search that certifies the
one-edge bound.

\begin{lemma}[Radius-five witness]\label{lem:radius-five}
Let $G$ be a simple 2-connected $C_8$-free plane graph with
$\delta(G)\ge 3$ and $|V(G)|\ge 8$.  Let $f$ be a $4^+$-face, let
$g$ be a triangular face, and put $k=d^*(f,g)$.  Assume $k\le3$.  Let
$P$ be a rooted triangular subpatch of $G$ whose root is $f$, which
contains $g$, and whose displayed incidences agree with those of $G$.
Then:
\begin{enumerate}
\item if every triangular face of $P$ at dual distance at most $k-2$ from
$g$ is closed in $P$, then no $4^+$-face of $G$ is closer to $g$
than $f$;
\item if every triangular face of $P$ at dual distance at most $k-1$ from
$g$ is closed in $P$, then $f$ is the unique-nearest $4^+$-face to
$g$.
\end{enumerate}
All triangular witnesses needed for these tests lie within dual distance at
most $5$ from $f$.
\end{lemma}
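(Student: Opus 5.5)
The plan is to argue directly with shortest dual paths and the notion of closedness in the patch. I will use one basic observation throughout. If a triangular face $t$ of $P$ is closed in $P$, then every face of $G$ sharing an edge with $t$ is displayed in $P$. This is because the displayed incidences agree with those of $G$, so both sides of each edge of $t$ are listed faces, and each such face is either a listed triangle or the root $f$.

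For item 1, suppose a $4^+$-face $h\ne f$ satisfies $d^*(h,g)=j<k$. Take a shortest dual path $g=T_0,T_1,\ldots,T_j=h$. Every interior face $T_1,\ldots,T_{j-1}$ is a triangle, since otherwise a closer $4^+$-face would appear earlier; we may replace $h$ by the first $4^+$-face on the path. I will show by induction on $i\le j-1$ that $T_i$ is a listed triangle of $P$. The face $T_0=g$ is listed. If $T_i$ is listed and $d^*(g,T_i)=i\le j-1\le k-2$, then $T_i$ is closed by hypothesis. Hence $T_{i+1}$ is displayed. It cannot be $f$, because then $d^*(f,g)\le i+1\le j<k$. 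Applying this at $i=j-1\le k-2$ shows that $h=T_j$ is displayed in $P$. But $P$ is a triangular patch whose only non-triangular face is the root $f$, so $h=f$, which is a contradiction.

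For item 2 I will run the same induction with ties allowed, that is $j\le k$. Now $T_{j-1}$ has distance at most $k-1$, so it is closed, and $h$ is again displayed, forcing $h=f$. There is one subtlety: an interior $T_{i+1}$ could equal $f$ only when $i+1=j$, because otherwise $d^*(f,g)<k$. So the induction goes through unchanged, and any nearest $4^+$-face other than $f$ would have to coincide with $f$. Together with item 1 this shows that $\Near(g)=\{f\}$.

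The radius statement follows from the triangle inequality. Every witness triangle $t$ used in these tests has $d^*(g,t)\le k-1\le 2$. Since $d^*(f,g)=k\le3$, we get $d^*(f,t)\le 5$. Closing the edges of a witness at distance $k-1$ therefore only involves faces at distance at most $6$ from $f$, which matches the layer bound in Algorithm~\ref{alg:unique-four}.

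The main obstacle is justifying the basic observation rigorously in the patch language. We need that ``closed'' means every edge of $t$ has both of its true ambient facial sides listed, and that the listed faces correspond injectively to genuine faces of $G$. For this I will invoke Proposition~\ref{prop:search-state-domination} and the agreement hypothesis on displayed incidences. I will also check that distances measured in $P$ agree with true dual distances in $G$ for the faces involved. That agreement holds because the hypotheses are stated with $d^*$ in $G$.
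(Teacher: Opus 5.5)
Your proposal is correct and follows essentially the same route as the paper: truncate a shortest dual path from $g$ to a competing face at its first $4^+$-face, then use closure with faithful incidences to show inductively that every intermediate triangle is listed. The contradiction comes at the last step, since the only non-triangular listed face is $f$; the ranges $j-1\le k-2$ and $j-1\le k-1$ and the radius bound $k+(k-1)\le5$ match the paper's, and your appeal to Proposition~\ref{prop:search-state-domination} is unnecessary because the hypothesis that displayed incidences agree with those of $G$ already gives your basic observation directly.
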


\begin{proof}
Suppose that a non-root $4^+$-face is tied with or closer to $g$ than
$f$.  Follow a shortest path from $g$ to such a face and truncate it at
the first $4^+$-face encountered.  This first $4^+$-face cannot be $f$:
reaching $f$ already takes $k$ steps, so any later non-root competitor
would be farther than $f$.  Write the truncated path as
\[
  g=g_0,g_1,\ldots,g_{m-1},h,
\]
where $h\ne f$ is a competing $4^+$-face.  By the choice of the first
$4^+$-face, all $g_i$ are triangular.  We first justify that the relevant closure
condition forces them to be listed, rather than assuming this.  The face
$g_0=g$ is listed.  Inductively, if $g_i$ is listed and lies within the
closure radius, the edge shared with the actual triangular face $g_{i+1}$
cannot be closed while $g_{i+1}$ is absent: faithful incidence means that
closing this edge lists its true second incident face.  Hence closure forces
$g_{i+1}$ to be listed.  This proves by induction that
$g_0,\ldots,g_{m-1}$ are listed whenever the stated closure radius reaches
them.

The edge shared by $g_{m-1}$ and the unlisted $4^+$-face $h$ is then
open, contradicting closure of $g_{m-1}$.  If $h$ is strictly closer than
$f$, then $m\le k-1$, so $m-1\le k-2$, proving (1).  If $h$ is tied
with or closer than $f$, then $m\le k$, so $m-1\le k-1$, proving (2).
The witnesses inspected in (1) or (2) are at distance at most
$k+(k-1)\le5$ from $f$.
\end{proof}

The verifier for $L_f(e)$ is rooted at a three-edge window of the root
face.  After relabelling, the distinguished boundary edge is $01$, and the
adjacent root edges $30$ and $12$ are also kept explicitly.  The remaining
arc of the root boundary is initially represented by the virtual side $23$.
A triangular continuation whose new third vertex lies on this omitted arc
inserts that vertex as an anchor and replaces the containing virtual side by
two ordered virtual subarcs.  Virtual sides are bookkeeping sides, not graph
edges, and are ignored by the $C_8$-test unless a later triangular
continuation creates a real edge between their endpoints.

The two non-distinguished root edges $30$ and $12$ carry $O$-seals.  An
$O$-seal is only a nonblocking marker for a displayed adjacent root entry: it
is not a non-root face, and triangular continuations may still touch that
edge.  Every relevant open real boundary edge is handled in exactly one of
the following ways:
\begin{enumerate}
\item the second incident face is triangular, and the search adds that
triangle using an admissible old vertex, an ordinary new vertex, or a new
virtual anchor on one of the current virtual subarcs;
\item the second incident face is a non-root $4^+$-face, recorded by an
$H$-seal;
\item the edge is another entry edge of the same root face, recorded by an
$R$-seal.
\end{enumerate}
The $R$-branch is not allowed on the displayed root-window edges
$30,01,12$, since the root face cannot lie on both sides of one of its own
boundary edges.  For a completed sealed state $S$, the verifier evaluates
the total $\sum_g\lambda_S(g)$, where a selected listed triangle has
\[
  \lambda_S(g)=\frac{p_S(g)}{q_S(g)s_S(g)}
\]
and every unselected triangle has $\lambda_S(g)=0$.
The quantities $p_S(g)$, $s_S(g)$, and $q_S(g)$ are the visible
distinguished-entry, visible shortest-entry, and merged-nearest-face counts
described in Section~\ref{sec:local-inputs}.  The finite certificate proves directly that
every terminal state in this three-root-edge sealed search has total value at
most $9/2$.  Thus these terminal values themselves give the bound.

\begin{algorithm}[t]
\caption{\textsc{SealedThreeRootEdgeSearch}}
\label{alg:sealed-single-edge}
\begin{algorithmic}[1]
\State Start with boundary $(3,0,1,2)$, distinguished root edge $01$,
displayed adjacent root entries $30$ and $12$, and virtual side $23$.
\State Mark $30$ and $12$ with nonblocking $O$-seals.
\While{there is an unresolved relevant open edge}
  \State Form the list of triangles $g$ whose sealed root distance is
  $k\le3$, whose nearest-face test is not already beaten by an $H$-seal,
  and which have a displayed shortest entry through $01$.
  \State If this list is empty, choose the lexicographically least unresolved
  displayed root-window edge; otherwise form the set of open edges incident
  with a listed triangle at listed-patch distance at most $k-1$ from one of
  these $g$, and choose the lexicographically least edge in that set.
  \State Across the chosen edge, branch into all legal old-vertex and ordinary
  new-vertex continuations, all new virtual-anchor continuations obtained by
  inserting the third vertex into a current virtual subarc of the selected
  edge's boundary component, and the
  appropriate $H$- and $R$-seals.
  \State Forbid triangular continuations through $H$- or $R$-sealed
  edges, but allow continuations touching an $O$-sealed edge.
  \State Reject non-plane boundary order, repeated triangles, an edge with
  three facial incidences, a new triangle sharing two edges with a listed
  face, a construction layer above $6$, or an $8$-cycle.
\EndWhile
\State In each completed state, identify all $H$-seals as one non-root
$4^+$-face and keep the $R$-seals as distinct root-entry edges.
\State Merge the visible $H$-competitors as specified above and compute the
exact rational upper-bound value.
\State Output the maximum terminal value and the number of terminal states
with value above $9/2$.
\end{algorithmic}
\end{algorithm}

\begin{lemma}[Compression and seal alternatives]
\label{lem:compression-seal-alternatives}
In the three-root-edge model for $L_f(e)$, the compressed root boundary and
the $H/R/O$-seal branches include every true local configuration that can
contribute through $e$, and may also include extra states that only increase
the computed upper bound.  More precisely, every relevant open real
boundary edge has exactly one true second side: a triangular face, a non-root
$4^+$-face, or another entry of the same root face $f$.  These are
represented respectively by a triangular continuation, an $H$-seal, and an
$R$-seal.  The triangular alternative includes the old-vertex, ordinary
new-vertex, and virtual-anchor branches of
Lemma~\ref{lem:faithful-cut-open}.  The $O$-seals on the two adjacent
displayed root edges are only nonblocking root-entry markers.
\end{lemma}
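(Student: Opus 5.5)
The plan is to argue edge by edge. Fix a relevant open real boundary edge $uv$ of a sealed state that represents a genuine configuration via Lemma~\ref{lem:faithful-cut-open}. Since $G$ is 2-connected and simple, every face boundary is a cycle of length at least $3$. Hence $uv$ lies on exactly two faces of $G$, and one of them is the listed triangle or listed root edge through which $uv$ was displayed. Let $F'$ be the other face. Then $F'$ is either triangular or a $4^+$-face. In the second case, $F'$ is either the root $f$ itself or a non-root face. These three cases are mutually exclusive and exhaustive, which gives the "exactly one true second side" assertion.

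Next I would match each case to a branch of the search.

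\emph{Triangular $F'$.} Let $w$ be its third vertex. By Lemma~\ref{lem:same-boundary}, a listed $w$ lies on the boundary component of $uv$, which is the old-vertex branch. An unlisted $w$ either lies in the interior of a virtual subarc, giving the virtual-anchor branch, or it does not, giving the ordinary new-vertex branch. This is the induction step already carried out in Lemma~\ref{lem:faithful-cut-open}. The rejection rules applied there are necessary conditions:
\begin{itemize}
\item repeated triangles, three facial incidences, and non-plane order are excluded by planarity and simplicity;
\item an $8$-cycle is excluded by $C_8$-freeness;
\item the shared two-edge path is excluded by Lemma~\ref{lem:shared-face-path} under $\delta(G)\ge3$.
\end{itemize}
So the true continuation survives.

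\emph{Non-root $4^+$-face $F'$.} This is recorded by an $H$-seal. Identifying all $H$-seals as a single competitor can only undercount the distinct competitors. That shrinks $q_S(g)$ below the true $|\Near(g)|$, or keeps a summand that is really zero, so the value only increases.

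\emph{$F'=f$.} The edge is another entry of the root face, recorded by an $R$-seal. This is legal only off the window $30,01,12$, because a root edge cannot have $f$ on both sides.

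The $O$-seals need separate treatment. I would observe that $30$ and $12$ genuinely carry $f$ on one side, so they can serve only as alternative root entries. Leaving them nonblocking keeps every triangular continuation touching them, which adds states but never removes one.

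For the domination claim, I would take each genuine configuration and follow its true second sides through the branches. This produces a terminal state $S$ whose listed triangles include every positive contributor through $e$, by part~4 of Lemma~\ref{lem:faithful-cut-open}. In that state, the visible counts satisfy $p_S(g)/s_S(g)\ge \mathbf 1_{e\in S_f(g)}/|S_f(g)|$ and $q_S(g)\le|\Near(g)|$. The reason is that the unlisted parts of $Q$ and the unseen faces only add entries or competitors. Therefore $\lambda_S(g)\ge c_{f,e}(g)$.

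The main obstacle is the merging of $H$-seals. One must check that identifying distinct true $4^+$-faces never \emph{raises} a denominator and never wrongly declares $f$ non-nearest. A merge reduces the number of distinct competitors but does not shorten any distance, so the nearest test against $f$ is unaffected. I would verify this carefully using the fresh sealed-dual breadth-first computation.
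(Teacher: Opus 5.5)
Your edge-by-edge analysis is essentially the paper's proof. 2-connectivity gives each open real edge exactly one true second side: a triangle, a non-root $4^+$-face, or $f$ itself. These are matched respectively to the triangular branches (old-vertex, ordinary new-vertex, virtual-anchor), the $H$-seal and the $R$-seal, and the $O$-seals leave continuations unblocked. That is all the lemma asserts. The phrase about extra states only increasing the bound just means that a maximum over a larger set of terminal states can only be larger.

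Your last two paragraphs go beyond this lemma; the paper proves load domination separately, in Lemma~\ref{lem:merged-h-denominator} and Proposition~\ref{prop:sealed-completion-domination}. The justification you sketch there is wrong. Identifying all $H$-sealed faces into one node $H$ \emph{does} shorten dual distances in general, because a path may enter $H$ through one seal and leave through another. So the sealed root distance $r$ can be distorted, and with it $p_S(g)$ and $s_S(g)$, not only $q_S(g)$. Your remark that unseen faces ``only add entries or competitors'' does not cover this effect.

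The argument that works is the one in Lemma~\ref{lem:merged-h-denominator}:
\begin{itemize}
\item If a shortest path to $g$ from the root node, or from a visible entry neighbour, passes through $H$, then $h<r$.
\item A simple shortest $H$-to-$g$ path leaves $H$ through a single seal and never returns. It is therefore a genuine dual path from one actual non-root $4^+$-face, which would beat $f$ if $g$ were a true contributor.
\item Hence every true contributor has $h\ge r$, and all its shortest root paths avoid $H$, so $r=d^*(f,g)$.
\item The three inequalities $p_S(g)\ge \mathbf 1_{\{e\in S_f(g)\}}$, $s_S(g)\le |S_f(g)|$ and $q_S(g)\le|\Near(g)|$ then follow. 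The last one uses the same single-seal path in the tied case $h=r$.
\end{itemize}
Reaching a terminal state along the true resolutions also needs the construction-layer audit of Lemma~\ref{lem:finite-guarded-sealed-certificate}, since the search rejects layers above $6$.
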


\begin{proof}
In a $2$-connected plane graph every edge has two facial sides and face
boundaries are cycles.  Once one side of an open real boundary edge is already
listed, the other side is either a triangle, a non-root face of degree at
least four, or the original root face $f$.  These alternatives are mutually
exclusive in the ambient embedding and are exactly the branches of the sealed
search.  For the triangular alternative, the virtual-anchor branches try every
current virtual subarc; exactly one branch records the correct position when
the third vertex lies on the omitted root arc.  The compressed sides of the
root face are bookkeeping sides, not graph edges, and an $O$-seal does not
block a triangular continuation.  The remaining branches only enlarge the
search space.
\end{proof}

\begin{lemma}[Unresolved-edge exposure]
\label{lem:unresolved-edge-exposure}
Fix a true local configuration affecting $L_f(e)$.  If the sealed search
chooses one unresolved relevant edge before another, then the true
configuration still has an exposure order following that choice until all
relevant edges have been resolved.
\end{lemma}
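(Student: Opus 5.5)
The plan is an exchange argument on exposure orders of the fixed true configuration. In that configuration every relevant open edge has a determined second side, namely the alternatives of Lemma~\ref{lem:compression-seal-alternatives}. Resolving an edge therefore means adding a specific genuine triangle or recording a specific seal. Whether this step is legal does not depend on when it is performed, apart from the data that later steps read.

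First I will fix a reference exposure order $\pi$ of the true configuration that is followed by the sealed search. Lemma~\ref{lem:faithful-cut-open} supplies one: it lists every positive contributor through $e$ along shortest paths, and it resolves every edge the witness tests need.

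Now suppose the search selects an unresolved relevant edge $a$ at a point where $\pi$ would next resolve a different edge $b$. I will modify $\pi$ by moving the resolution of $a$ forward to the current step and leaving the order of all other steps unchanged. Three things must be checked for the modified sequence.

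1. Every step stays legal. The true second side of $a$ is the same object whenever it is exposed. If it is a triangle whose third vertex is already listed, the old-vertex branch applies. If the third vertex is unlisted, the new-vertex or virtual-anchor branch applies, and Lemma~\ref{lem:faithful-cut-open} guarantees the vertex is introduced exactly once. Later steps that in $\pi$ introduced this same third vertex will instead use the old-vertex branch; this is exactly why the move requires the injectivity invariant.

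2. No rejection rule fires. The rules are plane order, repeated triangle, triple incidence, shared two-edge path, $8$-cycle, and the construction-layer bound. Every rule except the last is a property of the final genuine subpatch, so it cannot fire on a prefix of a genuine configuration.

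3. The layer bound still holds. For this rule I will rely on the audit cited in Proposition~\ref{prop:search-state-domination}, namely Lemma~\ref{lem:finite-guarded-sealed-certificate}, which states that no legal relevant branch is cut by the layer bound. Alternatively, I will note that candidate decisions use fresh breadth-first distances rather than construction layers.

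Iterating this swap over the deterministic sequence of choices, by induction on the number of resolved edges, yields an order that agrees with every choice the search makes. The induction stops once all relevant edges are resolved.

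The main obstacle is that relevance is itself dynamic. Moving the resolution of $a$ earlier can change the set $C(S)$, for example by sealing an $H$-competitor that removes a candidate. The eligible-edge set then changes, and later selections may differ from those made along $\pi$.

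To handle this, I will argue that relevance only shrinks the work to be done. The process terminates when $C(S)$ is empty or no eligible open edge remains. Every edge that affects $L_f(e)$ stays eligible as long as it is still needed, because of the shortest-path listing argument in Lemma~\ref{lem:faithful-cut-open} together with the closure radius of Lemma~\ref{lem:radius-five}. So each edge the true configuration still needs is eventually selected.

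Finally, the terminal value depends only on the completed set of resolutions, not on their order. Order-independence therefore carries the true load bound over to the terminal state reached by the search.
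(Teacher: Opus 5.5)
\paragraph{The exchange framework fails as set up.}
The reference order $\pi$ ``followed by the sealed search'' that you take from Lemma~\ref{lem:faithful-cut-open} presupposes the conclusion. The listing argument in item~4 of that lemma asserts that the edge leading to $T_{i+1}$ ``is eventually resolved by the deterministic rule''. That is exactly the forward completeness of the deterministic order which the present lemma is meant to supply. Besides, if $\pi$ already followed the search, there would be nothing to exchange.

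The swap ``move $a$ forward, keep all other steps'' is also not well defined, for two reasons.
\begin{itemize}
\item The face across $a$ may be exposed in $\pi$ through a different one of its edges. Then that later step becomes a repeated-face rejection and must be deleted, which changes which open edges it closes.
\item The edge $a$ may not be resolved in $\pi$ at all. This genuinely happens, because relevance is not monotone. Every newly listed triangle with $r_i\le3$, $h_i\ge r_i$, $p_i>0$ enters $C(S)$ and adds new eligible edges, while $H$- and $R$-seals can remove candidates.
\end{itemize}
So ``relevance only shrinks the work'' is false. Your closing appeal to order-independence of the terminal value does not rescue this, since different orders can terminate with different sets of resolutions.

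\paragraph{What is missing.}
No reference order is needed. Because $K$ is genuine, every real open edge of a representing state has a unique true second side. You can therefore simply resolve whichever edge the search selects by that side. The possible sides are exactly the branches of Lemma~\ref{lem:compression-seal-alternatives}, and the candidate rule enumerates every possible true third vertex.

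What must then be proved, one step at a time, is the paper's marked-edge invariant. Call a genuine side \emph{marked} if it can lie on a shortest root-to-candidate path or witness a closer or tied $4^+$-face. After each resolution, every other marked side must satisfy two things.
\begin{itemize}
\item It remains a real open side of exactly one resulting sector, with the same true second face. Splitting cannot make it virtual, duplicate it, or identify its endpoints, because virtual sides contain no real edge and $G$ is simple.
\item Its eligibility can disappear only when a strictly shorter competitor, or a zero contribution through $e$, has been exposed.
\end{itemize}
Induction on the number of unresolved marked sides then gives the lemma. Your sentence ``every edge that affects $L_f(e)$ stays eligible as long as it is still needed'' is precisely this claim. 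However, you support it only by the circular citation of Lemma~\ref{lem:faithful-cut-open} and Lemma~\ref{lem:radius-five}, rather than by analysing a single resolution step.

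\paragraph{The layer cutoff.}
The layer cutoff does not belong in this lemma. The paper's proof explicitly avoids the layer audit. Moreover, item~2 of Lemma~\ref{lem:finite-guarded-sealed-certificate} derives ``the layer bound omits no genuine relevant branch'' \emph{from} this lemma, so invoking that conclusion here is circular. Only the raw audit at reachable states could be used, and that is the job of Lemma~\ref{lem:forward-simulation-cutoff}. Your alternative remark about fresh breadth-first distances does not address the layer rejection rule at all.
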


\begin{proof}
Let $K\rightsquigarrow S$ denote the lifting relation of
Lemma~\ref{lem:faithful-cut-open}.  Strengthen it by marking every genuine
unresolved sector side whose second face can lie on a shortest root-to-candidate
path or can witness a closer or tied $4^+$-face.  Its image in $S$ is a real
open edge on the boundary of the corresponding search sector.  This marked
edge invariant is initially true.

Resolve the edge selected by the deterministic rule.  Its true second side is
one of the mutually exclusive alternatives in
Lemma~\ref{lem:compression-seal-alternatives}.  A triangular resolution splits
one disk sector into disk sectors; an old-vertex identification or a
virtual-anchor insertion merely records the already fixed endpoint order on
that sector boundary.  An $H$- or $R$-resolution closes only the selected
side.  In every case, any other marked genuine side remains a real side of
exactly one resulting sector, with the same true second face.  Boundary
splitting cannot turn it into a virtual side, duplicate it, or identify its
endpoints, because virtual sides contain no real edge and the ambient graph is
simple.  Its displayed relevance may disappear only if the newly exposed
object supplies a strictly shorter competitor or proves that the contribution
through $e$ is zero; then the side is no longer required for an upper bound.
Otherwise the same shortest-path or witness certificate remains, so the edge
remains a marked eligible edge for the deterministic relevance rule.

For the selected edge, the candidate rule enumerates every possible true
third vertex: a new interior vertex, an already listed occurrence on the same
sector boundary, or the unique virtual subarc containing an unlisted root
vertex.  These are respectively the new-vertex, old-vertex, and virtual-anchor
branches.  Thus one child $S'$ satisfies $K\rightsquigarrow S'$ and the
marked-edge invariant.  Induction on the number of unresolved marked sides
gives an exposure in the deterministic order.  This also proves the required
forward completeness of the dynamic relevance and candidate rules; it does
not rely on the layer audit.
\end{proof}

\begin{lemma}[Forward simulation through the layer cutoff]
\label{lem:forward-simulation-cutoff}
Let $K$ be a partial exposure of a genuine configuration affecting
$L_f(e)$, and let $S$ represent $K$ under the lifting invariant of
Lemma~\ref{lem:faithful-cut-open}.  If $S$ is not terminal and the
deterministic rule selects a relevant edge $a$, then there is a child $S'$
and the true resolution $K'$ of $a$ such that $S'$ represents $K'$.
Iterating gives either a terminal representative or a state at which the true
child would have construction layer above six.  The latter alternative does
not occur in the certified transition tree.
\end{lemma}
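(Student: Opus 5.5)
The plan is to prove the one-step statement first and then obtain the iteration by induction on the number of resolved relevant edges. For the one-step statement we start from a non-terminal state $S$ representing the partial exposure $K$, and let $a$ be the edge chosen by the deterministic relevance predicate. By Lemma~\ref{lem:unresolved-edge-exposure}, the genuine configuration has an exposure order that resolves $a$ next, so the true resolution $K'$ of $a$ is well defined. By Lemma~\ref{lem:compression-seal-alternatives}, the true second side of $a$ is exactly one of three things: a triangular face, a non-root $4^+$-face, or another entry of the root face $f$.

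We treat the three alternatives separately. In the $H$ and $R$ cases, the search contains the corresponding sealed child $S'$. Seals change neither vertex data nor virtual-subarc data, so the lifting invariant of Lemma~\ref{lem:faithful-cut-open} passes from $S$ to $S'$ unchanged. The one exception is that $R$ is forbidden on the window edges $30,01,12$; this is harmless, since a root edge cannot have $f$ on both sides. In the triangular case, let $w$ be the true third vertex. Then $w$ is new, or it is already listed on the same sector boundary (Lemma~\ref{lem:same-boundary}), or it lies in the interior of a unique virtual subarc. The induction step in the proof of Lemma~\ref{lem:faithful-cut-open} shows that the new-vertex, old-vertex, or virtual-anchor branch yields a child $S'$ representing $K'$. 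We must also check that $S'$ is not removed by any rejection rule other than the layer cap. Repeated triangles, triple incidences, and non-plane order cannot occur in the ambient plane graph. An $8$-cycle in $S'$ would be a genuine $8$-cycle by item~3 of Lemma~\ref{lem:faithful-cut-open}. The shared two-edge path is excluded by Lemma~\ref{lem:shared-face-path} under $\delta(G)\ge3$. Finally, deduplication by state key preserves successors, as shown in Proposition~\ref{prop:search-state-domination}.

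For the iteration, we note that each step strictly decreases the number of unresolved marked sides in the true configuration. This number is finite because only triangles within bounded dual distance of the window matter, by Lemma~\ref{lem:radius-five}. Consequently the process ends either at a terminal representative or at a step whose true child would receive construction layer above $6$ and so be rejected. To exclude the second outcome we would use the finite layer audit of the sealed certificate (Lemma~\ref{lem:finite-guarded-sealed-certificate}). That audit checks that in the certified transition tree no legal relevant triangular branch is ever discarded by the construction-layer bound. Since we have shown that the true child is legal with respect to every other rule, it would be such a branch, so the cutoff is never reached.

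The main obstacle is this last step. The construction layer is not the true dual distance: later $R$-seals and new adjacencies can shorten sealed root distances. So we cannot argue directly that relevant triangles have true layer at most $6$, as the ordinary searches do through breadth-first exposure, and must rely on the audited certificate instead. For that we need to make sure the audit's notion of a ``legal relevant branch'' matches exactly the child produced above. I would check this by observing that both are generated by the same transition function from the same state key, with the cap as the only differing test.
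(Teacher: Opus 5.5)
Your proposal is correct and follows essentially the same route as the paper's proof. Both arguments map the unique true second side of $a$ to an $H$-, $R$-, new-vertex, old-vertex or virtual-anchor transition, and both check that every non-layer rejection rule is a necessary condition (using $\delta(G)\ge3$ for the shared-path rule). Both use key bisimulation for deduplication and Lemma~\ref{lem:unresolved-edge-exposure} for the choice of $a$. Both then invoke the no-layer-limit audit of Lemma~\ref{lem:finite-guarded-sealed-certificate} to exclude the cutoff alternative.

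One small tidy-up for termination: attaching a triangle can expose new marked sides, so the count of currently unresolved marked sides need not drop at each step. It is cleaner to say that each step resolves a distinct real edge of the finite ambient configuration.
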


\begin{proof}
The true second side of $a$ is unique.  If it is nontriangular, it is either
the root face or a non-root $4^+$-face and hence is represented by the
corresponding $R$- or $H$-transition.  If it is triangular, its third
vertex is respectively new, already listed on the same disk-sector boundary,
or unlisted in the interior of exactly one virtual subarc.  These are exactly
the new-vertex, old-vertex, and virtual-anchor transitions.  The lifting
invariant proves that the chosen child preserves injectivity, cyclic order,
and all genuine incidences.  The rejection rules cannot reject this child:
repeated faces, a third facial incidence, a non-plane split, and a real
$8$-cycle contradict the ambient hypotheses; the shared-path rejection
contradicts $\delta(G)\ge3$; and the construction-layer update is exactly
the one used by the selected true resolution.  Lemma~\ref{lem:unresolved-edge-exposure} shows
that resolving $a$ first cannot destroy another required route or witness.
Key bisimulation in Proposition~\ref{prop:search-state-domination} shows that
deduplication retains a child with the same future transitions.

This proves the first assertion by induction on unresolved relevant sides.
The only remaining rejection is construction layer above six.  The
no-layer-limit retry recorded in
Lemma~\ref{lem:finite-guarded-sealed-certificate} is performed at every
reachable representative and tries every admissible third vertex at the
selected edge; it reports no otherwise legal relevant child above layer six.
This last sentence is exactly the finite assertion used here.  It does not
claim that the audit alone proves the lifting invariant: the transfer from a
genuine resolution to an enumerated child is the preceding mathematical
simulation.
\end{proof}

\begin{lemma}[Merged competitors preserve all three load factors]
\label{lem:merged-h-denominator}
Let $G$ satisfy the hypotheses of Lemma~\ref{lem:covering}, let
$f$ be a $4^+$-face of $G$, and let $S$ be a terminal sealed state
representing a genuine completion around $f$.  Let $e$ be the displayed
root edge.  For every listed triangle $g$,
\[
 c_{f,e}(g)
 \le \lambda_S(g).
\]
Here $\lambda_S(g)=p_S(g)/(q_S(g)s_S(g))$ for a triangle selected by
the verifier and $\lambda_S(g)=0$ otherwise.
In particular, identifying all $H$-sealed faces as one node is conservative
not only for $q_S(g)$, but also for the breadth-first computations defining
$p_S(g)$ and $s_S(g)$.
\end{lemma}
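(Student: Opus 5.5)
We will prove the inequality $c_{f,e}(g)\le\lambda_S(g)$ by comparing the three factors of $c_{f,e}(g)$ one at a time with their sealed counterparts. Write
\[
c_{f,e}(g)=\frac{\mathbf 1_{\{f\in\Near(g)\}}\mathbf 1_{\{e\in S_f(g)\}}}{|\Near(g)|\,|S_f(g)|},
\]
and define the sealed dual $D_S$ as follows. Its nodes are the root, the listed triangles, and a single merged node $H^*$ for all $H$-sealed faces. Its adjacencies are the listed shared edges, the root entries (real root edges together with $R$-sealed edges), and the $H$-seals.

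If $c_{f,e}(g)=0$ there is nothing to prove, since $\lambda_S\ge0$. So assume $f\in\Near(g)$ and $e\in S_f(g)$, and put $k=d^*(f,g)\le3$.

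The first step is to check that $g$ is selected with the correct distance. By Lemma~\ref{lem:faithful-cut-open}(4), $g$ is listed together with a shortest dual path $f,T_1,\dots,T_k=g$ through $e$, all of whose faces are triangular and listed. So the sealed root distance satisfies $r_g\le k$. Conversely, every $D_S$-path from the root is a genuine dual path. The only subtlety is that $H^*$ is a merged node. Still, a path through $H^*$ enters it from a triangle adjacent to some genuine $4^+$-face, and any path that passes through $H^*$ and later reaches the root is already at least as long as a path ending at that genuine $4^+$-face. Since $f$ is nearest, the merge therefore cannot shorten a root route below $k$, and $r_g=k$.

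By the same argument, the merged $H$-distance satisfies $h_g\ge k$. Each genuine competitor within distance $k$ is exposed by the witness search of Lemma~\ref{lem:radius-five}, and a distance to $H^*$ below $k$ would yield a genuine non-root $4^+$-face strictly closer than $f$. Hence $g$ passes the selection test $(r_g\le3,\ h_g\ge r_g,\ p_g>0)$, and $p_S(g)\ge1$.

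Next, for the denominators we need $s_S(g)\le|S_f(g)|$ and $q_S(g)\le|\Near(g)|$, with $p_S(g)\le s_S(g)$ holding by definition.

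For $s_S(g)$: the visible shortest entries are root edges $x$ such that some $D_S$-path of length $k$ starts across $x$. Because $r_g=k$, no such path can use $H^*$ as an intermediate node; doing so would require a $4^+$-face at distance at most $k-1$ from $g$. Such a path therefore consists of genuine triangles, and $x$ is a genuine shortest entry, so $s_S(g)\le|S_f(g)|$.

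For $q_S(g)$: it counts the root plus, when $h_g=k$, the single node $H^*$. This gives $q_S(g)\le2$. When $q_S(g)=2$, the equality $h_g=k$ is realized through a genuine $H$-sealed face at distance $k$, which is a second element of $\Near(g)$. Hence $q_S(g)\le|\Near(g)|$ in both cases.

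Combining these, and using $p_S(g)\ge1$,
\[
c_{f,e}(g)=\frac{1}{|\Near(g)|\,|S_f(g)|}\le\frac{p_S(g)}{q_S(g)\,s_S(g)}=\lambda_S(g).
\]

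The main obstacle is the claim that merging all $H$-seals into one node never creates spurious short paths. Both the root-distance computation and the entry count use breadth-first search through $D_S$, and a path entering $H^*$ through one sealed edge and leaving through another corresponds to no genuine dual path. I would handle this by showing that any $D_S$-path through $H^*$ contains a prefix ending at $H^*$ that is a genuine path to a $4^+$-face. Such a prefix can only certify a closer or tied competitor; it can never lower the distance to the root below the genuine value. The same observation also shows that $h_g$ is computed correctly from below.
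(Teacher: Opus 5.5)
Your proposal is correct and follows essentially the same route as the paper. You use the listed shortest path through $e$ to get $r\le d^*(f,g)$. You show that a simple shortest path out of the merged $H$-node leaves through one genuine $H$-seal, so $h<k$ would produce a strictly closer non-root $4^+$-face. You then conclude that the paths defining $r$, $p_S(g)$ and $s_S(g)$ avoid $H$ and are genuine, and that $q_S(g)=2$ only when a genuine tied competitor exists.

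Two small points. The bound $k=d^*(f,g)\le 3$ should be cited from Lemma~\ref{lem:covering}. Your appeal to Lemma~\ref{lem:radius-five} to expose all competitors is unnecessary, since only the upper bounds $q_S(g)\le|\Near(g)|$ and $s_S(g)\le|S_f(g)|$ are needed.
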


\begin{proof}
Let $D_S$ be the sealed dual graph after the $H$-nodes have been
identified, let $F$ and $H$ denote its root and merged competitor nodes,
and set
\[
 r=d_{D_S}(F,g),\qquad h=d_{D_S}(H,g).
\]
Every shortest path may be chosen simple.  If a shortest path from $F$ to
$g$ uses $H$, its suffix from $H$ to $g$ has length less than $r$;
thus $h<r$.  The same argument applies to a shortest path from a visible
entry neighbour $a$ satisfying $1+d_{D_S}(a,g)=r$: if that path uses
$H$, then again $h<r$.

Suppose first that the true contribution through $e$ is nonzero.  A shortest
dual path leaving $f$ through $e$ is contained in the represented local
configuration and gives a path in $D_S$ that avoids $H$.  If $h<r$,
choose a simple shortest $H$-to-$g$ path.  It leaves $H$ through one
actual $H$-seal and never returns to $H$, so its remaining edges give a
genuine dual path from the corresponding non-root $4^+$-face to $g$ of
length $h$.  This face would be strictly closer than $f$, a contradiction.
Hence $h\ge r$.

It follows from the first paragraph that the shortest paths used in the
definitions of $r$, $p_S(g)$, and $s_S(g)$ avoid $H$.  They are genuine
visible dual paths.  The true path through $e$ and a shortest visible root
path show, in opposite directions, that the true root distance equals $r$.
A nonzero contribution has $f\in\Near(g)$, so
Lemma~\ref{lem:covering} gives $r=d^*(f,g)\le3$.  Together with
$h\ge r$, this shows that $g$ passes the distance and competitor tests.
The displayed entry is counted by $p_S(g)$, and every entry counted by
$s_S(g)$ is a true shortest entry.  In particular $p_S(g)>0$, so $g$
is selected and $\lambda_S(g)=p_S(g)/(q_S(g)s_S(g))$.  Therefore
\[
 \mathbf 1_{\{e\in S_f(g)\}}\le p_S(g),
 \qquad |S_f(g)|\ge s_S(g).
\]
If $h=r$, the simple $H$-to-$g$ path constructed above shows that one
actual non-root $4^+$-face is tied with $f$, so
$|\Near(g)|\ge2=q_S(g)$.  If $h>r$, then $q_S(g)=1$ and
$|\Near(g)|\ge q_S(g)$ is immediate.  Multiplying the three inequalities
gives the claim for a nonzero true contribution.

If the true contribution is zero, the left-hand side is zero.  This also
covers every triangle omitted from the sealed sum, including the case
$h<r$.  Hence the inequality holds in all cases.
\end{proof}

\begin{proposition}[Sealed states upper-bound genuine completions]
\label{prop:sealed-completion-domination}
Fix a boundary edge $e$ of a face $f$ with $d(f)\ge9$.  For every true
radius-six local configuration that can affect $L_f(e)$, the three-root-edge
sealed search has a terminal state $S$ whose computed value
\[
  \sum_g \lambda_S(g)
\]
is at least the true contribution of all triangles in that configuration to
$L_f(e)$.
\end{proposition}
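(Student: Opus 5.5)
The plan is to combine the forward simulation of Lemma~\ref{lem:forward-simulation-cutoff} with the termwise domination of Lemma~\ref{lem:merged-h-denominator}.

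\emph{Step 1: realize the configuration as a terminal state.} Fix a genuine radius-six configuration $K$ around $f$ and $e$. Relabel so that $e=01$, the neighbouring root edges are $30$ and $12$, and the rest of $\partial f$ is the arc $Q$ from $2$ to $3$. By Lemma~\ref{lem:faithful-cut-open}, the initial state with boundary $(3,0,1,2)$ and virtual side $23$ represents the empty partial exposure of $K$ under the lifting invariant.

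\emph{Step 2: run the simulation.} Apply Lemma~\ref{lem:forward-simulation-cutoff} repeatedly. Each time the deterministic rule selects a relevant edge, there is a child state representing the true resolution of that edge. The true alternatives are supplied by Lemma~\ref{lem:compression-seal-alternatives}, and Lemma~\ref{lem:unresolved-edge-exposure} guarantees that resolving in the deterministic order loses no required route or witness. Deduplication is harmless by the key-bisimulation part of Proposition~\ref{prop:search-state-domination}. The layer-above-six alternative is excluded by the finite audit cited there, so the iteration ends at a terminal state $S$ that represents a completion of $K$.

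\emph{Step 3: check that $S$ lists every positive contributor.} By part 4 of Lemma~\ref{lem:faithful-cut-open}, every triangle $g$ with $c_{f,e}(g)>0$ lies on a shortest dual path leaving $f$ through $e$. Every prefix of that path is an eligible candidate, so the relevance predicate keeps selecting edges along it until $g$ is listed. The same lemma shows that the unlisted parts of $Q$, and faces restored there, can only enlarge denominators or make a contribution vanish.

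\emph{Step 4: sum the termwise bound.} Lemma~\ref{lem:merged-h-denominator} gives $c_{f,e}(g)\le\lambda_S(g)$ for every listed $g$. Unlisted triangles contribute zero by Step 3, and $\lambda_S\ge0$. Summing over all $g$ therefore gives
\[
  \sum_g c_{f,e}(g)\le\sum_g\lambda_S(g),
\]
which is the claim.

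\emph{Main obstacle.} The delicate point is Step 3: showing that the terminal state's closure really reaches every positive contributor and every competitor witness. A contributor has root distance at most $3$ by Lemma~\ref{lem:covering}. Its competitor witnesses lie within $k-1$ of it by Lemma~\ref{lem:radius-five}, hence within radius five of the root, and their closing triangles within radius six. So the radius-six hypothesis on $K$ should suffice. I would argue this by induction along the shortest path, as in the proof of Lemma~\ref{lem:faithful-cut-open}.
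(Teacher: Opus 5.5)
Your proposal is correct and follows essentially the same route as the paper. You lift the genuine configuration to a terminal sealed state using the cut-open lifting, the seal alternatives, the deterministic-exposure lemma and the layer audit (which you package via Lemma~\ref{lem:forward-simulation-cutoff}), then apply the termwise bound $c_{f,e}(g)\le\lambda_S(g)$ of Lemma~\ref{lem:merged-h-denominator} and sum; your explicit use of part~4 of Lemma~\ref{lem:faithful-cut-open} to show that unlisted triangles contribute zero just states outright what the paper leaves implicit when it sums only over listed triangles.
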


\begin{proof}
Compress the boundary of $f$ so that the distinguished edge $e$ is the
displayed edge $01$, the two neighbouring root edges are $30$ and $12$,
and the rest of the boundary of $f$ is initially the virtual side $23$.
By Lemma~\ref{lem:faithful-cut-open} and
Proposition~\ref{prop:search-state-domination}, every true triangular
continuation across a relevant real open edge is generated by an old-vertex,
ordinary new-vertex, or virtual-anchor branch of the search.  The anchor branch
records the order of every encountered vertex on the omitted root arc.  In
particular, a sector cut off by a real edge and a virtual root subarc remains
represented by its compressed two-vertex component, with no duplicated
displayed vertex and no artificial graph edge.
Lemma~\ref{lem:finite-guarded-sealed-certificate} verifies that no relevant
triangular continuation is lost to the construction-layer bound.  By
Lemma~\ref{lem:compression-seal-alternatives}, the
non-triangular possibilities are exactly the $H$- and $R$-seal branches,
and an $O$-seal never blocks a true triangular continuation.  By
Lemma~\ref{lem:unresolved-edge-exposure}, the deterministic choice of which
unresolved relevant edge to resolve next does not remove a true completion.

Lemma~\ref{lem:merged-h-denominator} now compares the terminal value with the
true fractional load one triangle at a time, including the possible effect of
the merged $H$-node on all breadth-first distances:
\[
 c_{f,e}(g)
 \le \lambda_S(g).
\]
The search terminates only after every open edge that can affect these
denominators has been closed by a listed triangle or resolved by an $H$- or
$R$-seal;
by Lemma~\ref{lem:radius-five}, these are exactly the witnesses relevant to
triangles with $d^*(f,g)\le3$.  Summing the displayed inequality over the
listed triangles gives the claimed upper bound.
\end{proof}

\begin{lemma}[Finite sealed certificate]
\label{lem:finite-guarded-sealed-certificate}
The finite search tree specified in Algorithm~\ref{alg:sealed-single-edge}
has a certified terminal bound with the following properties.
\begin{enumerate}
\item The recorded search has exactly $223766$ labelled states, $223765$
parent-child edges, one root state, and $136987$ terminal leaves.  Every
non-root state has a recorded parent, every recorded child is obtained by one
legal transition of the sealed search, including every virtual-anchor
transition, and the terminal leaves are exactly the states with no legal child.
\item A stored construction layer may exceed the current sealed root distance,
but never underestimates it.  The certificate records $201940$ states with
such a difference, $554355$ differing face records, and maximum excess $5$.
Within the transition system specified above, at every reachable state the verifier selects the same unresolved relevant
edge used by the transition relation and retries every admissible third-vertex
choice there with the construction-layer bound relaxed.  It finds no legal
relevant triangular branch above layer $6$.  Together with the forward
completeness supplied by Lemma~\ref{lem:unresolved-edge-exposure}, this proves
that the layer bound omits no genuine relevant branch.
\item Every virtual-anchor transition records the new label between the
endpoints of one current virtual subarc before splitting the boundary.  A
compressed two-vertex boundary consisting of one real side and one virtual
subarc is retained as a search component.  Such a component occurs in
$88838$ states, with $124347$ such components in total.  The audit finds no
invalid boundary component.
\item Every terminal state has computed upper-bound load at most $9/2$.  There
are no overbound terminal states.
\end{enumerate}
The mathematical certificate assertions and accompanying regression totals are
\[
\begin{array}{c|c}
\text{quantity} & \text{certified value}\\
\hline
\text{regression totals: states / edges / leaves}
  &223766/223765/136987\\
\text{construction-layer underestimates}
  &0\\
\text{omitted relevant layer branches}
  &0\\
\text{invalid boundary components}
  &0\\
\text{computed-load maximum / overbound terminals}
  &9/2\;/\;0 .
\end{array}
\]
Only exhaustion, transition validity, the zero audit counts, and the load
bound are mathematical assertions.  The exact state, edge, and leaf totals
are consistency checks for this finite computation.
\end{lemma}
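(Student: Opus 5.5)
The lemma is a statement about a finite computation, so the proof will be a verification argument rather than a combinatorial derivation. I will run (or re-run) the program \textsc{SealedThreeRootEdgeSearch} of Algorithm~\ref{alg:sealed-single-edge} from its root state, with boundary $(3,0,1,2)$, virtual side $23$ and $O$-seals on $30,12$. Every generated state and its parent pointer will be stored, so that the search is recorded as an explicit rooted tree. An independent checker then reads this record and confirms four things. First, every child is obtained from its parent by exactly one legal transition: a new-vertex, old-vertex or virtual-anchor attachment across the edge selected by the deterministic relevance predicate, or an $H$/$R$-seal of that edge. Second, each rejected branch violates one of the listed rules. Third, a state is a leaf exactly when the selection rule returns no edge. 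Fourth, the counts of states, edges and leaves match, which establishes item~1 together with the tree property (edges $=$ states $-1$, a unique root).

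For item~2, at each reachable state the checker recomputes the sealed root distances by a fresh breadth-first search in $D_S$. It compares these with the stored construction layers and verifies that no stored layer is ever smaller; the excess statistics are simply tallied. It then recomputes the selected edge using the order-independent lexicographic rule and retries every admissible third vertex there, with the layer bound $6$ removed, reporting any legal relevant child whose layer would exceed $6$. The required output is zero such children. For item~3, the checker walks each boundary record and verifies its invariants. Ordinary components must be simple cycles on at least three distinct vertices. Virtual subarcs must have disjoint interiors, ordered consistently with the anchor labels inserted. Each two-vertex component must consist of exactly one real side and one virtual side. Every anchor must have been inserted between the endpoints of a single current subarc.

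For item~4, at each leaf the checker identifies all $H$-seals to one node, keeps the $R$-seals as distinct root entries, and computes $r_i$, $h_i$, $p_i$ and the selected set $C(S)$. It then evaluates $\sum_g p_S(g)/(q_S(g)s_S(g))$ in exact rational arithmetic and checks that the value is at most $9/2$, counting any leaf that exceeds it. Exact rationals are essential here, since floating point could hide a boundary case attaining exactly $9/2$.

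The main obstacle is item~2, not the load bound. That item has to certify that the layer cutoff in the generator discards nothing relevant, and the subtle point is that sealed distances can shrink after new adjacencies or $R$-seals appear. This is why the retry must be performed at the edge chosen by the same deterministic rule that defines the transitions, because then Lemma~\ref{lem:unresolved-edge-exposure} supplies forward completeness. I would guard against a checker that merely replays the generator's own code by implementing the retry, the boundary audit and the load evaluation independently, for instance cross-checking planarity with a separate library. The numeric totals are then reported only as regression data, while the zero counts and the $9/2$ maximum are the mathematical content.
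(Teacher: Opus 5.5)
Your proposal is correct and follows essentially the same route as the paper. The paper's proof also has the generator record the reachable transition tree, and has an independent C++ checker re-parse every state, re-verify the rooted tree and each parent--child transition, recompute the construction-layer/sealed-distance audit, the no-layer-limit retry at the deterministically selected edge, the compressed-boundary audit and every terminal load, and report the zero audit counts and the $9/2$ maximum, treating the state, edge and leaf totals only as regression checks; beyond that, the paper also recomputes each state's canonical labelled key and adds a malformed-certificate test driver, while your explicit check that every non-recorded branch is rejected states the exhaustion direction a little more clearly.
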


\begin{proof}
This lemma is a finite statement about the explicitly specified transition
system.  Its verification does not assume
Proposition~\ref{prop:sealed-completion-domination}.  The generator records the
reachable transition tree and the theorem-level audit fields, and the archived
certificate is then checked as follows.  Proposition
\ref{prop:sealed-completion-domination} is used only to interpret the certified
terminal maximum as an upper bound for genuine plane graphs.  The
theorem-to-code dictionary records the source programs, commands,
outputs, and certificate files.

The C++ certificate-checking program is a re-parser and transition re-checker.
It reads every recorded state, recomputes its canonical labelled key, verifies
the rooted tree and all
parent-child transitions, and recomputes every terminal load.  It also
recomputes the construction-layer, sealed-distance, no-layer-limit branch, and
compressed-boundary audits from every representative state.  It reports zero
construction-layer underestimates, zero omitted relevant branches, zero
invalid boundary components, zero child or terminal-load mismatches, and no
overbound terminal state.

The source package additionally contains a C++ malformed-certificate test
driver.  It creates temporary corrupted copies of the archived certificate and
checks that the certificate checker rejects truncated node and edge tables,
duplicate or negative identifiers, out-of-range children, extra TSV fields,
malformed overbound rows, and embedded NUL bytes.  These defensive tests
exercise the certificate grammar and failure paths; the semantic transition
and load checks remain the responsibility of the C++ certificate checker.
Fixed state counts and fingerprints remain consistency checks rather than
mathematical assumptions.
\end{proof}

\begin{lemma}[Sealed single-edge verification]
\label{lem:single-edge-capacity}
Let $G$ be a simple $2$-connected $C_8$-free plane graph with
$\delta(G)\ge3$ and $|V(G)|\ge8$.  Let $f$ be a face of degree at
least $9$, and let $e\in E(\partial f)$.  Then
\[
  L_f(e)\le \frac92.
\]
\end{lemma}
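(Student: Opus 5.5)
The plan is to assemble Lemma~\ref{lem:single-edge-capacity} from the sealed-search machinery already in place, namely Proposition~\ref{prop:sealed-completion-domination} and Lemma~\ref{lem:finite-guarded-sealed-certificate}. Fix $G$, the face $f$ with $d(f)\ge9$, and the edge $e\in E(\partial f)$. First, I rewrite $L_f(e)=\sum_g c_{f,e}(g)$ and discard every triangle with $c_{f,e}(g)=0$. For each remaining triangle we have $f\in\Near(g)$, so Lemma~\ref{lem:covering} gives $d^*(f,g)\le3$. Every such $g$ is reached by a shortest dual path that starts across $e$, and Lemma~\ref{lem:faithful-cut-open}(4) shows that every triangle on this path is itself a positive contributor through $e$. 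The support of the sum is therefore a finite set of triangles lying within dual distance $3$ of $f$. By Lemma~\ref{lem:radius-five}, every witness that could produce a closer or tied $4^+$-face lies within dual distance $5$ of $f$. So the whole quantity $L_f(e)$ is determined by a radius-six local configuration around the three-edge window at $e$.

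Next, I relabel so that $e$ is the displayed edge $01$ and its two neighbouring boundary edges of $f$ are $30$ and $12$. Because $d(f)\ge9$, these three edges are distinct and the omitted arc $Q$ from $2$ to $3$ is nontrivial. This matches the initial state of Algorithm~\ref{alg:sealed-single-edge}. Proposition~\ref{prop:sealed-completion-domination} then supplies a terminal sealed state $S$ representing this genuine configuration, and Lemma~\ref{lem:merged-h-denominator} gives $c_{f,e}(g)\le\lambda_S(g)$ for each listed $g$. Summing over $g$, and using the fact that unlisted triangles contribute zero, I obtain $L_f(e)\le\sum_g\lambda_S(g)$. Finally, Lemma~\ref{lem:finite-guarded-sealed-certificate}(4) states that every terminal state has computed value at most $9/2$, which yields $L_f(e)\le 9/2$.

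The main obstacle is making sure the genuine configuration really reaches a terminal state of the certified tree rather than being cut off along the way. Three things could go wrong: the construction-layer bound of $6$ could prune it, deduplication could discard it, or the deterministic edge-selection order could skip it. I would handle these in turn. For the layer bound, I would cite Lemma~\ref{lem:forward-simulation-cutoff} together with part (2) of Lemma~\ref{lem:finite-guarded-sealed-certificate}, which together show that no legal relevant branch exceeds layer $6$. For deduplication, I would cite the key-bisimulation argument in Proposition~\ref{prop:search-state-domination}. For the selection order, I would cite Lemma~\ref{lem:unresolved-edge-exposure}.

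I would also check two points about the hypotheses. The shared-path rejection used in the search is valid here because $\delta(G)\ge3$, via Lemma~\ref{lem:shared-face-path}. The omitted part of $\partial f$ can only enlarge denominators or zero out a summand, by Lemma~\ref{lem:faithful-cut-open}(4). With these checks in place, the domination inequality holds for the genuine graph and not only for the listed patch.
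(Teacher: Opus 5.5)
Your proposal is correct and follows the paper's proof essentially step for step. Lemma~\ref{lem:covering} restricts contributors to $d^*(f,g)\le 3$, Lemma~\ref{lem:radius-five} confines the relevant configuration to layer six, Proposition~\ref{prop:sealed-completion-domination} (which already contains the per-triangle comparison of Lemma~\ref{lem:merged-h-denominator} and the handling of the layer cutoff, deduplication, and edge-selection order) dominates $L_f(e)$ by a terminal value, and Lemma~\ref{lem:finite-guarded-sealed-certificate} caps that value at $9/2$.
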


\begin{proof}
By Lemma~\ref{lem:covering}, every triangle contributing to $L_f(e)$ has
dual distance $k\le3$ from $f$.  For such a layer-$k$ triangle $g$,
Lemma~\ref{lem:radius-five} says that the witnesses relevant to closer or
tied $4^+$-faces are incident with triangular faces at dual distance at most
$k-1$ from $g$.  Since $k\le3$, any triangular continuation needed to
expose such a witness has layer at most $k+(k-1)+1\le6$.  Hence the sealed
search is finite.  By Proposition~\ref{prop:sealed-completion-domination}, the
maximum terminal value produced by the sealed search is at least the true
value of $L_f(e)$ for every ambient $C_8$-free graph.  The finite
certificate in Lemma~\ref{lem:finite-guarded-sealed-certificate} gives maximum
terminal value $9/2$, so $L_f(e)\le9/2$.
This verifies certificate \CertFive.
\end{proof}

\subsection{Verification of certificate \CertSix: the eight-vertex base case}

\begin{algorithm}[t]
\caption{\textsc{BaseCaseN8Search}}
\label{alg:base-n8}
\begin{algorithmic}[1]
\For{$m\in\{17,18\}$}
  \State Enumerate all labelled simple graphs on vertex set $\{1,\ldots,8\}$ with exactly $m$ edges.
  \For{each enumerated graph $G$}
    \State Search for a Hamiltonian cycle in $G$.
    \If{a Hamiltonian cycle is found}
      \State Discard $G$, since on eight vertices this is exactly a copy of $C_8$.
    \Else
      \State Apply the induced-subgraph sparsity test $e(H)\le 3|V(H)|-6$ for every induced $H$ with at least three vertices.
      \If{the sparsity test does not reject $G$}
        \State Run the exhaustive memoized deletion/contraction search for a $K_5$ or $K_{3,3}$ minor.
        \State Record whether $G$ remains planar and non-Hamiltonian.
      \EndIf
    \EndIf
  \EndFor
\EndFor
\State Certify that no planar non-Hamiltonian graph remains for $m=17$ or $m=18$.
\end{algorithmic}
\end{algorithm}
\begin{proof}[Verification of certificate \CertSix]
This is a finite verification.  The dictionary records its source program,
command line, and recorded output.
The verification enumerates all labelled simple graphs on the vertex set
$\{1,\ldots,8\}$
with $m\in\{17,18\}$ edges.  For each graph it first searches for a
Hamiltonian cycle by depth-first search.  On eight vertices, a Hamiltonian
cycle is exactly a copy of $C_8$.  Hence the graphs that remain after this
filter are precisely the $C_8$-free candidates.  If no Hamiltonian cycle is found,
the program first applies the following necessary condition for planarity:
every induced subgraph on $s\ge3$ vertices has at most $3s-6$ edges.  A
graph failing this condition is safely rejected as non-planar.  The remaining
graphs are tested by an exhaustive $K_5/K_{3,3}$-minor search: recursively
deleting and contracting edges, with memoization, until either a $K_5$ or
$K_{3,3}$ subgraph is found or all minors have been exhausted.  By Wagner's
theorem~\cite{Wagner1937}, this minor test is equivalent to planarity for the
remaining graphs.

The finite verification gives the following counts:
\[
\begin{array}{c|r|r|r|r|r}
m & \#\text{graphs} & \#\text{non-Ham.}
  & \#\text{sparsity-rej.} & \#\text{minor tests}
  & \#\text{planar non-Ham.}\\
\hline
17 & 21474180 & 1311324 & 1244796 & 66528 & 0\\
18 & 13123110 & 362404 & 357308 & 5096 & 0 .
\end{array}
\]
Thus no simple planar graph on eight vertices with $17$ or $18$ edges is
non-Hamiltonian, verifying certificate \CertSix.
\end{proof}

\end{document}